\theoremstyle{plain}
\newtheorem{corollary}{\bf Corollary}
\newtheorem{lemma}{\bf Lemma}
\newtheorem{proposition}{\bf Proposition}
\newtheorem{remark}{Remark}
\newtheorem{theorem}{\bf Theorem}
\numberwithin{equation}{section}
\DeclareMathOperator{\ric}{Ric}
\begin{document}

\title{A note on gradient Ricci soliton warped metrics}
\author[Jos\'e N.V. Gomes, Marcus A.M. Marrocos]{Jos\'e N.V. Gomes$^1$, Marcus A.M. Marrocos$^2$}
\author[Adrian V.C. Ribeiro]{Adrian V.C. Ribeiro$^3$}
\address{$^{1}$Departamento de Matem\'atica, Universidade Federal de S\~ao Carlos, S\~ao Carlos, S\~ao Paulo, Brazil.}
\address{$^2$CMCC-Universidade Federal do ABC, Santo Andr\'e, S\~ao Paulo, Brazil.}
\address{$^3$Escola Superior de Tecnologia, Universidade do Estado do Amazonas, Manaus, Amazonas, Brazil.}
\email{$^1$jnvgomes@ufscar.br; jnvgomes@pq.cnpq.br}
\email{$^2$marcus.marrocos@ufabc.edu.br}
\email{$^3$aribeiro@uea.edu.br}
\urladdr{$^1$https://www2.ufscar.br}
\urladdr{$^2$http://cmcc.ufabc.edu.br}
\urladdr{$^3$http://www3.uea.edu.br}
\keywords{Ricci soliton, Warped metric, Einstein type metrics, Triviality}
\subjclass[2010]{53C21, 53C25, 53C15}

\begin{abstract}
In this note, we prove triviality and nonexistence results for gradient Ricci soliton warped metrics. The proofs stem from the construction of gradient Ricci solitons that are realized as warped products, from which we know that the base spaces of these products are Ricci-Hessian type manifolds. We study this latter class of manifolds as the most appropriate setting to prove our results.
\end{abstract}
\maketitle

\section{Introduction}
A complete Riemannian metric $g$ on a smooth manifold $M$ is a gradient Ricci soliton if there exists a smooth function $\phi$ on $M$ such that the Ricci tensor of $g$ is given by
\begin{equation*}
\ric+\nabla^2\phi=\lambda g,
\end{equation*}
for some constant $\lambda\in\Bbb{R}$. Note that the parameters in this equation are $g$ and $\phi$, while the constant $\lambda$ is obtained by taking trace of this equation. A gradient Ricci soliton is called {\it expanding}, {\it steady} or {\it shrinking} if $\lambda<0$, $\lambda=0$ or $\lambda>0$, respectively. When the \emph{potential function} $\phi$ is a constant, the pair $(M,g)$ is an Einstein manifold and $g$ is called a {\it trivial Ricci soliton}.

Given two Riemannian manifolds $(B^n,g_{B})$ and $(F^m,g_{F})$ as well as a positive smooth warping function $f$ on $B^n$, let us consider on the product manifold $B^n\times F^m$ the warped metric $g=\pi^{*}g_{B}+(f\circ \pi)^{2}\sigma^{*}g_{ F},$ where $\pi$ and $\sigma$ are the natural projections on $B^n$ and $F^m$, respectively. Under these conditions the product manifold is called the {\it warped product} of $B$ and $F$. If $f$ is a constant, then $(B^n\times F^m,g)$ is a standard Riemannian product.

Gradient Ricci solitons are self-similar solutions of the Ricci flow, and often arise as possible singularity models of this flow, see, e.g., Chow et al.~\cite{ChowEtal} or Hamilton~\cite{Hamilton}. Not only the literature about this subject is already very rich, but also it is not unlikely that the Ricci flow may play a fundamental role in the understanding of countless physical facts. For instance, the solution by Perelman of the celebrated Poincaré conjecture relies on Hamilton's careful study of the Ricci flow. Perelman did not himself publish this solution, but it can be checked in a series of preprints posted by him on arXiv.org in 2002 and 2003 or, alternatively, in a complete prove by Cao and Zhu~\cite{cao-zhu}.

Warped products appear in a natural manner in Riemannian geometry and their applications abound, see Bishop and O'Neill~\cite{BishopO'Neill} or O'Neill~\cite{oneill}. They are the source of examples and counter-examples of many classical geometric questions. For instance, warped products have been the key ingredient in the recent work of Marrocos and Gomes~\cite{Marrocos-Gomes} to give a partial answer to a question posed in 1990 by Steven Zelditch about the generic situation of the multiplicity of the eigenvalues of the Laplacian on principal bundles.

In this note we study gradient Ricci solitons that are realized as warped metrics $g$ on $M=B^n\times F^m$. For simplicity, we will say that $B^n\times_f F^m$ is a gradient Ricci soliton warped product and $g$ is a gradient Ricci soliton warped metric. By a result of Borges and Tenenblat we can assume without loss of generality that the potential function of such a soliton is the lift $\tilde{\varphi}$ of a smooth function $\varphi$ on $B^n$ to $M$, see Lemma~\ref{PotentialFunctionGRSWP}. Throughout this work $\psi$ stands for the smooth function $\psi=\varphi-m\ln f$ on $B$.  Note that in the setting of gradient Ricci soliton warped metrics the parameters are $\varphi$ and $f$. Hence, it is natural we assume conditions on these parameters to study nonexistence and triviality results of such a metrics. We will see that the class of Ricci-Hessian type manifolds $(B^n,g_B,\psi)$ is the most appropriate setting to prove our results.

Albeit the class of warped metrics with nonconstant warping functions provides a rich class of examples in Riemannian geometry, in this work, under natural geometric assumptions on the warping function as well as on the potential function of a gradient Ricci soliton warped metric, we give some obstructions for constructing such a metric.

The main geometric object of study of this note stems from the construction of gradient Ricci soliton warped products by Feitosa et al.~\cite{FFG}. We briefly describe it in Proposition~\ref{CharacterizationGRSPW}, from which we know that the base spaces of these products are Ricci-Hessian type manifolds.

We begin Section~\ref{Section2} with some comments and results on Ricci-Hessian type manifolds, which allow us to see that this class of manifolds is interesting in its own right. We use methods from weighted Riemannian manifold theory to prove the validity of a weak maximum principle at infinity for a specific diffusion operator on  Ricci-Hessian type manifolds, see Proposition~\ref{WMP}. In particular, such a principle is valid in the setting of gradient Ricci solitons as well as of $m$-quasi-Einstein manifolds, as proven before by Pigola et al.~\cite{Pigola et al.} and by Rimoldi~\cite{Rimoldi}, respectively.

To prove our theorems we restrict ourselves to Ricci-Hessian type manifolds as being the base spaces of gradient Ricci soliton warped products, which means that they must satisfy the additional Eq.~\eqref{SuffCondition2}, see Section~\ref{Section2.2}. The main theorem is a triviality result on this class of manifolds. Its proof is motivated by the corresponding result for gradient Ricci solitons proved in~\cite{Pigola et al.}.

\begin{theorem}\label{PropositionVarphi}
Let $(B^n,g_B,\psi)$ be a Ricci-Hessian type manifold with $\lambda<0$ satisfying Eq.~\eqref{SuffCondition2}. Then, the parameter function $\varphi$ is a constant provided it satisfies $|\nabla\varphi|\in L^p(B^n, e^{-\psi}d\mathrm{vol})$, for some $1\leq p\leq+\infty$.
\end{theorem}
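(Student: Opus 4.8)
The plan is to transplant the method of Pigola et al.\ for gradient Ricci solitons to the weighted base $(B^n,g_B,\psi)$, working with the drift Laplacian $\Delta_\psi u:=\Delta u-\langle\nabla\psi,\nabla u\rangle$ attached to the measure $e^{-\psi}d\mathrm{vol}$ and with the Bakry--\'Emery tensor $\ric_\psi:=\ric_{g_B}+\nabla^2\psi$. Two structural facts drive the argument. First, for a base of a gradient Ricci soliton warped product the error term in the Ricci-Hessian type equation is positive semidefinite (it is a nonnegative multiple of a squared $1$-form), so that $\ric_\psi\ge\lambda\,g_B$ and in particular $\ric_\psi(\nabla\varphi,\nabla\varphi)\ge\lambda|\nabla\varphi|^2$. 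Second, Eq.~\eqref{SuffCondition2} (the extra constraint coming from the fiber), combined with the trace of the Ricci-Hessian equation and the Hamilton-type identity available here, should yield the exact analogue of Hamilton's identity $\Delta_\varphi\varphi=-2\lambda\varphi+\mathrm{const}$ on gradient Ricci solitons, namely
\begin{equation*}
\Delta_\psi\varphi=-2\lambda\varphi+\mathrm{const}.
\end{equation*}

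Granting this identity, $\nabla\Delta_\psi\varphi=-2\lambda\nabla\varphi$, so the weighted Bochner formula
\begin{equation*}
\tfrac12\Delta_\psi|\nabla\varphi|^2=|\nabla^2\varphi|^2+\langle\nabla\varphi,\nabla\Delta_\psi\varphi\rangle+\ric_\psi(\nabla\varphi,\nabla\varphi)
\end{equation*}
together with the first fact gives $\tfrac12\Delta_\psi|\nabla\varphi|^2\ge|\nabla^2\varphi|^2-2\lambda|\nabla\varphi|^2+\lambda|\nabla\varphi|^2=|\nabla^2\varphi|^2+|\lambda|\,|\nabla\varphi|^2$; the positivity of $-2\lambda=2|\lambda|$ is exactly where $\lambda<0$ is used. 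Writing $v:=|\nabla\varphi|$ and using the Kato inequality $|\nabla|\nabla\varphi||\le|\nabla^2\varphi|$, one obtains, in the weak sense on $B^n$,
\begin{equation*}
\Delta_\psi v\ \ge\ |\lambda|\,v\ \ge\ 0,
\end{equation*}
so $v\ge0$ is $\Delta_\psi$-subharmonic with an extra zeroth-order gain.

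It remains to feed in $v\in L^p(B^n,e^{-\psi}d\mathrm{vol})$. For $1<p<+\infty$ I would test $\Delta_\psi v\ge|\lambda|v$ against $\chi^2v^{p-1}$, with $\chi$ a radial cutoff equal to $1$ on a geodesic ball $B_R$, supported in $B_{2R}$, and $|\nabla\chi|\le C/R$; integrating by parts against $e^{-\psi}d\mathrm{vol}$ and absorbing the gradient term (possible since $p>1$) leaves
\begin{equation*}
|\lambda|\int_{B_R}v^{p}\,e^{-\psi}d\mathrm{vol}\ \le\ \frac{C}{R^{2}}\int_{B_{2R}\setminus B_R}v^{p}\,e^{-\psi}d\mathrm{vol},
\end{equation*}
and the right-hand side tends to $0$ as $R\to+\infty$ since $\int_{B^n}v^{p}e^{-\psi}d\mathrm{vol}<\infty$; hence $v\equiv0$. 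For $p=+\infty$, $v$ is bounded and I would apply the weak maximum principle at infinity for $\Delta_\psi$ of Proposition~\ref{WMP}: along a sequence $x_k$ with $v(x_k)\to\sup_{B^n}v$ one has $\Delta_\psi v(x_k)\le1/k$, while $\Delta_\psi v(x_k)\ge|\lambda|v(x_k)$, forcing $\sup_{B^n}v=0$. The borderline value $p=1$ is again handled through Proposition~\ref{WMP} and its consequences for the $\Delta_\psi$-diffusion. In all cases $|\nabla\varphi|\equiv0$, so $\varphi$ is constant.

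The main obstacle is the second structural fact: extracting from Eq.~\eqref{SuffCondition2}, and from the precise Ricci-Hessian equation for a base of a gradient Ricci soliton warped product, the identity $\Delta_\psi\varphi=-2\lambda\varphi+\mathrm{const}$ with exactly this coefficient --- this is what makes the middle Bochner term reinforce $\ric_\psi(\nabla\varphi,\nabla\varphi)$ rather than cancel it, and the hypothesis $\lambda<0$ is indispensable here: for $\lambda\ge0$ the resulting differential inequality degenerates. A secondary difficulty is running the weighted $L^p$-vanishing step uniformly for all $1\le p\le+\infty$ with a priori control only on $|\nabla\varphi|$ and not on $\varphi$ itself, which is why the endpoints $p=1$ and $p=+\infty$ are precisely the cases that must lean on Proposition~\ref{WMP}.
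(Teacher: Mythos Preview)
Your approach for $p=+\infty$ and for $1<p<+\infty$ is essentially the paper's: the identity $\Delta_\psi\varphi=c-2\lambda\varphi$ is exactly Lemma~\ref{RewrittenEquations}, your Bochner inequality coincides with Lemma~\ref{Bochner2}\eqref{Lem-Bochner2a} (the ``error'' $\frac{m}{f^2}\langle\nabla f,\nabla\varphi\rangle^2$ is the positive semidefinite term you discard), and after Kato you apply either Proposition~\ref{WMP} or an $L^p$-Liouville argument. The only cosmetic difference is that for $1<p<+\infty$ you run the cutoff computation by hand, whereas the paper quotes the packaged version (Proposition~\ref{Liouville}) to get that $|\nabla\varphi|$ is constant and then reads off $|\nabla\varphi|=0$ from the exact Bochner identity.

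The genuine gap is the case $p=1$. Saying it is ``again handled through Proposition~\ref{WMP} and its consequences'' does not suffice: the weak maximum principle at infinity applies to bounded functions, and for $v=|\nabla\varphi|\in L^1$ you have no a priori bound. The relevant $L^1$-Liouville theorem (Theorem~17 in \cite{Pigola et al.}) requires, in addition to $v\Delta_\psi v\ge 0$ and $v\in L^1(e^{-\psi}d\mathrm{vol})$, a growth restriction of the form $v(x)=O(e^{\alpha r(x)^{2-\epsilon}})$. The paper supplies this via Lemma~\ref{estimate potential}, which gives the much stronger linear bound $|\nabla\varphi|(x)\le c(1+r(x))$. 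That lemma is not a soft consequence of Proposition~\ref{WMP}: it is obtained by using Proposition~\ref{CharacterizationGRSPW} to realize $(B^n,g_B,\psi)$ as the base of an actual gradient \emph{expanding} Ricci soliton warped product $B^n\times_f F^m$ (choosing any space form fiber with the right Einstein constant $\mu$), and then invoking Zhang's global gradient estimate \cite{Zhang} for complete expanding solitons along the slice $B^n\times\{q\}$. This construction is where Eq.~\eqref{SuffCondition2} and the hypothesis $\lambda<0$ are used a second time, and it is the missing ingredient in your $p=1$ endpoint.
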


The previous theorem is an improvement of Theorem~2 by Pigola et al. in the class of gradient Ricci soliton warped products $M=B^n\times_f F^m$. It is a consequence of the Borges and Tenenblat's result and Fubini's theorem. Indeed, since the potential function $\phi=\tilde\varphi$ and $d\mathrm{vol}_M=f^md\mathrm{vol}_{B\times F}$, we get
\begin{equation*} 
\int_M |\nabla\phi|^p e^{-\phi}d\mathrm{vol}_M = \int_Fd\mathrm{vol}_F\int_B|\nabla\varphi|^p e^{-\psi}d\mathrm{vol}_B,
\end{equation*}
which proves our statement as well as the naturalness of our hypothesis.

The next three theorems are applications of our results on the class of Ricci Hessian type manifolds satisfying Eq.~\eqref{SuffCondition2}.

\begin{theorem}\label{NonexistenceGRSPW}
Let $B^n\times_f F^m$ be a gradient steady Ricci soliton warped product with fiber having nonnegative scalar curvature. Then, it must be a standard Riemannian product with potential function satisfying $e^{-\varphi}\in L^1(B^n,d\mathrm{vol})$ provided  $f$ satisfies the following conditions: either $ f\in L^p(B^n,e^{-\psi}d\mathrm{vol})$, for some $1< p<+\infty$, or $ f\in L^1(B^n,e^{-\psi}d\mathrm{vol})$ and $f(x)=O(e^{\alpha r(x)^{2-\epsilon}})$ as  $r(x)\rightarrow +\infty$, for some constants $\alpha,\epsilon>0$.
\end{theorem}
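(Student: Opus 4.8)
The plan is to reduce Theorem~\ref{NonexistenceGRSPW} to the triviality result of Theorem~\ref{PropositionVarphi} together with the structure equations for gradient steady Ricci soliton warped products from Proposition~\ref{CharacterizationGRSPW}. First I would set $\lambda=0$ in the soliton equation and recall the associated equations on the base $B^n$: the warping function $f$ and the parameter function $\varphi$ satisfy a relation of the form $f\Delta_\psi f = \mu - (\text{scalar curvature of } F)$ type, where $\Delta_\psi = \Delta - \langle\nabla\psi,\nabla\cdot\rangle$ is the drift Laplacian and $\mu$ is the (nonpositive, here zero) separation constant, together with the Ricci-Hessian equation $\ric + \nabla^2\psi = \text{(multiple of }df\otimes df)$ which is precisely Eq.~\eqref{SuffCondition2}. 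Since the fiber has nonnegative scalar curvature and $\lambda=0$, the sign of the right-hand side of the $f$-equation forces $\Delta_\psi f \le 0$, so $f$ is a nonnegative $\Delta_\psi$-superharmonic function on the weighted manifold $(B^n, g_B, e^{-\psi}d\mathrm{vol})$.

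The key step is then to invoke the weak maximum principle at infinity for the diffusion operator $\Delta_\psi$ on Ricci-Hessian type manifolds, Proposition~\ref{WMP}, in the two integrability regimes on $f$ in the hypothesis. In the case $f\in L^p(B^n, e^{-\psi}d\mathrm{vol})$ with $1<p<+\infty$, a superharmonic $L^p$ function on a weighted manifold where the maximum principle at infinity holds must be constant (this is the $L^p$-Liouville property that Proposition~\ref{WMP} is designed to yield); in the borderline case $p=1$, the extra growth control $f(x)=O(e^{\alpha r(x)^{2-\epsilon}})$ supplies the missing ingredient to rule out the escape of the supremum to infinity and again force $f$ constant. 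Either way, $f$ is constant, so $B^n\times_f F^m$ is a standard Riemannian product. Once $f$ is constant, $\psi = \varphi - m\ln f$ differs from $\varphi$ by a constant, so $|\nabla\varphi| = |\nabla\psi|$ and the integrability hypothesis on $f$ passes to an integrability statement that places us in the hypotheses of Theorem~\ref{PropositionVarphi} (with $\lambda=0$—here one must be slightly careful, since Theorem~\ref{PropositionVarphi} is stated for $\lambda<0$, so the steady case must instead be handled directly from the now-simplified structure equations).

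With $f$ constant the remaining structure equations collapse dramatically: the Ricci-Hessian equation becomes $\ric + \nabla^2\varphi = 0$ up to a constant, i.e.\ $(B^n,g_B)$ is itself a gradient steady Ricci soliton with potential $\varphi$, and the $f$-equation reduces to the statement that the separation constant vanishes and the fiber is Ricci-flat. Plugging back into the remaining equation for $\varphi$ (the one coming from the warped-product trace, which in the steady case reads $\Delta_\psi\varphi = $ const or an equation relating $\Delta\varphi$ and $|\nabla\varphi|^2$) and integrating against $e^{-\varphi}d\mathrm{vol}$ over $B^n$ yields, after an integration by parts justified by the same maximum principle / integrability bookkeeping, both that $\varphi$ has the required integrability $e^{-\varphi}\in L^1(B^n,d\mathrm{vol})$ and—if one wants the full force—that the soliton is in fact trivial. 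I expect the main obstacle to be the borderline $p=1$ case: making the growth hypothesis $f(x)=O(e^{\alpha r(x)^{2-\epsilon}})$ interact correctly with the weak maximum principle at infinity requires choosing the right test/cutoff functions adapted to the weight $e^{-\psi}$ and controlling the error terms using the Ricci-Hessian condition Eq.~\eqref{SuffCondition2}, rather than a pointwise Ricci lower bound; this is exactly where the Ricci-Hessian structure (as opposed to a generic Bakry–Émery bound) has to be used in an essential way.
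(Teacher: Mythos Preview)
Your proposal has a sign error that breaks the argument. From Lemma~\ref{RewrittenEquations} one obtains $f\Delta_\psi f = \mu - \lambda f^2 + |\nabla f|^2$; with $\lambda=0$ and fiber of nonnegative scalar curvature one has $\mu\ge 0$ (not $\mu\le 0$ as you write), so $f\Delta_\psi f = \mu + |\nabla f|^2 \ge 0$ and $f$ is $\Delta_\psi$-\emph{subharmonic}, not superharmonic. This matters: the Liouville tool in play (Proposition~\ref{Liouville}) is the statement that a nonnegative $L^p$ weak $\Delta_\psi$-subsolution must be constant; a superharmonic positive $L^p$ function need not be constant, and the weak maximum principle at infinity alone does not force constancy in either direction. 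The paper therefore applies Proposition~\ref{Liouville} in the $L^p$ case and \cite[Theorem~17]{Pigola et al.} in the $L^1$+growth case, directly concluding $f$ is constant; Proposition~\ref{WMP} is not the mechanism here.

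Your path to $e^{-\varphi}\in L^1$ is also off-track: no integration by parts or reduction to Theorem~\ref{PropositionVarphi} (which anyway requires $\lambda<0$) is needed. Once $f$ is a constant, $e^{-\psi}=f^m e^{-\varphi}$, so the standing hypothesis $\int_B f^p e^{-\psi}\,d\mathrm{vol}<\infty$ reads $f^{p+m}\int_B e^{-\varphi}\,d\mathrm{vol}<\infty$, giving $e^{-\varphi}\in L^1(B,d\mathrm{vol})$ immediately. Finally, note that Eq.~\eqref{SuffCondition2} is the Hamilton-type trace identity, not the Ricci--Hessian equation; the latter is Eq.~\eqref{SuffCondition1}.
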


Theorem~\ref{NonexistenceGRSPW} is in the spirit of Rimoldi~\cite{Rimoldi}. In this paper, the author proves triviality results for Einstein warped products with noncompact bases. His proofs rely on maximum principles at infinity and Liouville-type theorems on $m$-quasi-Einstein manifolds.

The next result is a nonexistence theorem for the expanding case.

\begin{theorem}\label{TrivialityGRSPW}
It is not possible to construct a gradient expanding Ricci soliton warped product $B^n\times_f F^m$ with fiber having nonnegative scalar curvature and warping function satisfying the following conditions: either $f\in L^p(B^n,e^{-\psi}d\mathrm{vol})$, for some $1<p\leq+\infty$, or $f\in L^1(B^n,e^{-\psi}d\mathrm{vol})$ and $ f(x)=O(e^{\alpha r(x)^{2-\epsilon}})$ as  $r(x)\rightarrow +\infty$, for some constants $\alpha,\epsilon>0$.
\end{theorem}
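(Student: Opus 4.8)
The plan is to feed the warping function $f$ into the weighted-manifold machinery of Section~\ref{Section2}, through the scalar constraint that the base of a gradient Ricci soliton warped product must satisfy. First I would invoke Proposition~\ref{CharacterizationGRSPW}: if $B^n\times_f F^m$ is a gradient expanding Ricci soliton warped product (so $\lambda<0$), then $(B^n,g_B,\psi)$ is a Ricci-Hessian type manifold satisfying Eq.~\eqref{SuffCondition2}, the fiber is Einstein with $\ric_F=\mu g_F$, and the hypothesis that $F^m$ has nonnegative scalar curvature $m\mu$ forces $\mu\geq 0$. Writing $\Delta_{\psi}u=\Delta u-\langle\nabla\psi,\nabla u\rangle$ for the weighted Laplacian on $B^n$, Eq.~\eqref{SuffCondition2} amounts to $f\Delta_{\psi}f=|\nabla f|^{2}-\lambda f^{2}+\mu$; since $f>0$, $\mu\geq 0$ and $\lambda<0$, this yields the pointwise inequalities
\[
\Delta_{\psi}f\geq -\lambda f>0\qquad\text{and}\qquad |\nabla f|^{2}\leq f\Delta_{\psi}f.
\]
So $f$ is a positive, quantitatively $\Delta_{\psi}$-subharmonic function on the complete manifold $(B^n,g_B)$ (it is complete as a closed totally geodesic leaf of the complete warped product), and the crux is that such a function cannot lie in $L^{p}(B^n,e^{-\psi}d\mathrm{vol})$: the hypotheses will force $f$ to be $\Delta_{\psi}$-harmonic, hence constant, whereupon Eq.~\eqref{SuffCondition2} reduces to $\lambda f^{2}=\mu\geq 0$, contradicting $\lambda<0$.

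To extract this I would argue by cases on $p$. For $1<p<+\infty$, test the identity $f\Delta_{\psi}f=|\nabla f|^{2}-\lambda f^{2}+\mu$ against $\phi^{2}f^{p-2}e^{-\psi}d\mathrm{vol}$, where $\phi=\phi_{R}$ is a standard nested cutoff, $\phi\equiv 1$ on $B_{R}$, $\operatorname{supp}\phi\subset B_{2R}$, $|\nabla\phi|\leq C/R$; integrating $\int_{B}\phi^{2}(\Delta_{\psi}f)f^{p-1}e^{-\psi}d\mathrm{vol}$ by parts with respect to the weighted measure and absorbing the gradient cross-term by Young's inequality yields a coercive estimate that bounds $(-\lambda)\int_{B}\phi^{2}f^{p}e^{-\psi}d\mathrm{vol}$ by $(C'/R^{2})\int_{B}f^{p}e^{-\psi}d\mathrm{vol}$. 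Since $f\in L^{p}(B^n,e^{-\psi}d\mathrm{vol})$, letting $R\to+\infty$ forces $(-\lambda)\int_{B}f^{p}e^{-\psi}d\mathrm{vol}=0$, hence $f\equiv 0$, contradicting $f>0$. For $p=+\infty$, $f$ is bounded with $\sup_{B}f>0$, and I would apply the weak maximum principle at infinity for $\Delta_{\psi}$ on Ricci-Hessian type manifolds (Proposition~\ref{WMP}) to $f$, obtaining $x_{k}\in B$ with $f(x_{k})>\sup_{B}f-1/k$ and $\Delta_{\psi}f(x_{k})<1/k$; this is impossible since $\Delta_{\psi}f(x_{k})\geq -\lambda f(x_{k})\geq -\lambda(\sup_{B}f-1/k)$, whose right side stays bounded away from $0$. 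For $p=1$, test $\Delta_{\psi}f\geq -\lambda f$ against $\phi^{2}e^{-\psi}d\mathrm{vol}$ and use $|\nabla f|\leq (f\Delta_{\psi}f)^{1/2}$ from the second inequality above to bound $\int_{B}\phi^{2}(\Delta_{\psi}f)e^{-\psi}d\mathrm{vol}$ by $(C''/R^{2})\int_{B}f\,e^{-\psi}d\mathrm{vol}$; with $f\in L^{1}(B^n,e^{-\psi}d\mathrm{vol})$, and the growth control $f(x)=O(e^{\alpha r(x)^{2-\epsilon}})$ available to justify the passage to the limit via a Karp-type argument, one gets $\int_{B}(\Delta_{\psi}f)e^{-\psi}d\mathrm{vol}=0$, hence $0\geq -\lambda\int_{B}f\,e^{-\psi}d\mathrm{vol}>0$, a contradiction. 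When $B^n$ is compact one simply takes $\phi\equiv 1$, so the statement is immediate in that case.

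Rewriting Eq.~\eqref{SuffCondition2} in terms of $\psi=\varphi-m\ln f$ and reading off the two inequalities is routine once Proposition~\ref{CharacterizationGRSPW} is available; the real work, and the step I expect to be the main obstacle, is the passage to the limit $R\to+\infty$, i.e.\ controlling the gradient of $f$ near infinity with no a priori bound. The endpoint $p=1$ is genuinely harder than the range $1<p<+\infty$ because the coefficient that makes the integration by parts coercive degenerates there, so the clean argument of that range is unavailable and one must instead exploit the extra growth hypothesis on $f$ through a Liouville/maximum-principle-at-infinity argument adapted to the weighted measure, as in Rimoldi~\cite{Rimoldi}. The case $p=+\infty$ is comparatively soft, resting directly on Proposition~\ref{WMP}.
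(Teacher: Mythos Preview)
Your approach is essentially the paper's: reduce via Proposition~\ref{CharacterizationGRSPW} to the scalar identity $f\Delta_{\psi}f=\mu-\lambda f^{2}+|\nabla f|^{2}$ on the base (this is the paper's Eq.~\eqref{mu&Lambda}, which comes from Eq.~\eqref{SuffCondition3} rather than Eq.~\eqref{SuffCondition2} as you wrote), and then eliminate each integrability case by the weak maximum principle (Proposition~\ref{WMP}) for $p=\infty$ and weighted $L^{p}$-Liouville results for finite $p$. The only cosmetic difference is that the paper invokes Proposition~\ref{Liouville} for $1<p<\infty$ and \cite[Theorem~17]{Pigola et al.} for $p=1$, whereas you sketch the underlying cutoff/integration-by-parts estimates by hand.
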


Notice that Theorem~~\ref{TrivialityGRSPW} is already an extension of Theorem 11 in~\cite{Rimoldi}. Moreover, Theorems~\ref{NonexistenceGRSPW} and \ref{TrivialityGRSPW} constitute an extension of Theorem~9 in~\cite{Rimoldi}.

It is well-known that Bryant constructed nontrivial examples of noncompact gradient steady Ricci soliton warped products with fiber having positive scalar curvature and with a warping function invariant by rotations. Bryant did not himself publish this result, but it can be checked in \cite{ChowEtal}. Moreover, Feitosa et al. constructed nontrivial examples of gradient expanding Ricci soliton warped products with fiber having nonpositive scalar curvature and with a warping function invariant under translation, see \cite[Corollary 2]{FFG}. We observe that these warping functions do not satisfy the hypotheses in Theorems~\ref{NonexistenceGRSPW} and \ref{TrivialityGRSPW}. Thus, the integrability assumptions in these theorems are necessary.

\begin{remark}
We can also consider in Theorems~\ref{NonexistenceGRSPW} and~\ref{TrivialityGRSPW} the weighted Riemannian volume density  $e^{-\varphi}d\mathrm{vol}$ instead of $e^{-\psi}d\mathrm{vol}$. Indeed, if $f\in L^p(B^n,e^{-\varphi}d\mathrm{vol})$ and $q=p-m>1$, then $\int_Bf^qe^{-\psi}d\mathrm{vol}=\int_Bf^{q+m}e^{-\varphi}d\mathrm{vol}<\infty$, i.e, $f\in L^q(B^n,e^{-\psi}d\mathrm{vol})$. For $p-m=1$ case, we must suppose in addition that $f(x)=O(e^{\alpha r(x)^{2-\epsilon}})$ as $r(x)\rightarrow +\infty$, for some constants $\alpha,\epsilon>0$. In both cases, the results are as described in these theorems.
\end{remark}

According to Corollary~1 in \cite{FFG}, the compactness of the base of a gradient Ricci soliton warped product implies restrictions on its existence. Besides, Proposition~4 in \cite{FFG} establishes a compactness criterion to Ricci-Hessian type manifold, which implies restrictions on the existence of gradient shrinking Ricci soliton warped products. Here, we also prove a nonexistence result in the shrinking case.

\begin{theorem}\label{TheoremLnf}
It is not possible to construct a gradient shrinking Ricci soliton warped product $B^n\times_fF^m$ with fiber having nonpositive scalar curvature and warping function satisfying the following conditions: either $|\nabla\ln f|\in L^p(B,e^{-\psi}d\mathrm{vol})$, for some $1<p\leq+\infty$, or $|\nabla\ln f|\in L^1(B^n,e^{-\psi}d\mathrm{vol})$ and $|\nabla\ln f|=O(e^{\alpha r(x)^{2-\epsilon}})$ as  $r(x)\rightarrow +\infty$, for some constants $\alpha,\epsilon>0$.
\end{theorem}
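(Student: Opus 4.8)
The plan is to reduce Theorem~\ref{TheoremLnf} to the triviality statement of Theorem~\ref{PropositionVarphi} together with the structural identities for gradient Ricci soliton warped products recorded in Proposition~\ref{CharacterizationGRSPW}. Recall that by the Borges--Tenenblat reduction (Lemma~\ref{PotentialFunctionGRSWP}) the potential function is the lift of a function $\varphi$ on $B^n$, and the base $(B^n,g_B,\psi)$ with $\psi=\varphi-m\ln f$ is a Ricci-Hessian type manifold satisfying Eq.~\eqref{SuffCondition2}. In the shrinking case $\lambda>0$, so Theorem~\ref{PropositionVarphi} does not apply directly to $\varphi$; instead I would argue that the hypothesis on $|\nabla\ln f|$, combined with the fiber having nonpositive scalar curvature $\mathrm{Scal}_F\le 0$, forces a sign on the relevant Bochner/diffusion quantity that the weak maximum principle at infinity (Proposition~\ref{WMP}) can then exploit to conclude $f$ is constant — and a standard Riemannian product that is a gradient shrinking soliton on a noncompact base with $\lambda>0$ is impossible, yielding nonexistence.

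Concretely, the key steps, in order, are: (1) From Proposition~\ref{CharacterizationGRSPW}, extract the scalar equation relating $\Delta_\psi \ln f$ (or $\Delta \ln f$) to $|\nabla\ln f|^2$, $\lambda$, and $\mathrm{Scal}_F/f^2$; schematically $f\,\Delta_\psi f = -\lambda f^2 + (\text{stuff}) - \mathrm{Scal}_F$ or the companion identity for $u=\ln f$, so that the sign $\mathrm{Scal}_F\le 0$ makes the right-hand side have a definite sign relative to $\lambda>0$. (2) Apply the weak maximum principle at infinity of Proposition~\ref{WMP} to the function $f$ (equivalently $\ln f$) under the stated $L^p(B^n,e^{-\psi}d\mathrm{vol})$ growth hypothesis on $|\nabla\ln f|$ — this is exactly the integrability input Proposition~\ref{WMP} was designed to consume, and the borderline $p=1$ case is handled by the extra $O(e^{\alpha r^{2-\epsilon}})$ growth bound, as in Rimoldi's arguments. (3) Conclude from the maximum principle and the sign of the PDE that $\nabla f\equiv 0$, i.e.\ $f$ is constant, so $B^n\times_f F^m$ is a standard Riemannian product. (4) Observe that a standard product $B^n\times F^m$ that is a gradient shrinking Ricci soliton would make $(B^n,g_B,\tilde\varphi)$ a gradient shrinking soliton with $\mathrm{Ric}_F = \lambda g_F$, $\lambda>0$; but then $F^m$ is a compact Einstein manifold with positive scalar curvature, contradicting $\mathrm{Scal}_F\le 0$ (unless $m=0$, which is excluded). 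This final contradiction gives nonexistence.

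The main obstacle I anticipate is Step~(1)--(2): getting the PDE for $\ln f$ into precisely the form to which Proposition~\ref{WMP}'s weak maximum principle applies with the \emph{correct} diffusion operator $\Delta_\psi = \Delta - \langle\nabla\psi,\nabla\,\cdot\,\rangle$, and checking that the drift term coming from Eq.~\eqref{SuffCondition2} does not spoil the sign condition needed to run the argument; in particular one must verify that the lower-order terms involving $|\nabla\ln f|^2$ enter with a favorable sign (or can be absorbed) so that $\sup$ or $\inf$ of $\ln f$ propagates to a global constancy conclusion rather than merely an a priori estimate. A secondary technical point is ensuring the $L^p$-to-$L^1$ reduction for the drifted measure $e^{-\psi}d\mathrm{vol}$ is legitimate on the possibly-noncompact, possibly-incomplete-looking base — but completeness of $g_B$ follows from completeness of the soliton metric $g$ on $M$, so Proposition~\ref{WMP} is indeed available. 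Once the sign bookkeeping is settled, Steps~(3) and~(4) are short.
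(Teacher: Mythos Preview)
Your plan has a genuine gap in Steps~(2)--(3): you propose to apply the weak maximum principle (Proposition~\ref{WMP}) or an $L^p$-Liouville result to $f$ or $\ln f$, but neither tool consumes the hypothesis you actually have. Proposition~\ref{WMP} requires the function itself to be bounded above, and the $L^p$-Liouville (Proposition~\ref{Liouville}) requires the function itself to lie in $L^p(e^{-\psi}d\mathrm{vol})$; the theorem only gives you $|\nabla\ln f|\in L^p(e^{-\psi}d\mathrm{vol})$, which controls neither $\sup\ln f$ nor $\|\ln f\|_{L^p}$. So the scalar identity $\Delta_\psi\ln f=\mu f^{-2}-\lambda\le -\lambda<0$ from Lemma~\ref{RewrittenEquations}, while correct, is not by itself enough to run your Steps~(2)--(3).

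The paper's route fixes exactly this mismatch: it works not with $\ln f$ but with $|\nabla\ln f|$, which is precisely the function whose integrability is assumed. Using the Bochner identity of Lemma~\ref{Bochner2}\eqref{Lem-Bochner2b} (and Kato, Lemma~\ref{bochner ineq}), one gets
\[
\tfrac12\Delta_\psi|\nabla\ln f|^2\ \ge\ \lambda|\nabla\ln f|^2\ \ge\ 0
\quad\text{and}\quad
|\nabla\ln f|\,\Delta_\psi|\nabla\ln f|\ \ge\ 0,
\]
since $\lambda>0$ and $\mu\le 0$. Now Proposition~\ref{WMP} (for $p=\infty$), Proposition~\ref{Liouville} (for $1<p<\infty$), or \cite[Theorem~17]{Pigola et al.} (for $p=1$ with the growth bound) apply directly to $|\nabla\ln f|$ and force $|\nabla\ln f|\equiv 0$, i.e.\ $f$ constant. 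This already contradicts $\Delta_\psi\ln f=\mu f^{-2}-\lambda<0$, so your Step~(4) product-level argument, while correct, is unnecessary; this is the content of Proposition~\ref{Propositionlnf}, from which Theorem~\ref{TheoremLnf} follows via Proposition~\ref{CharacterizationGRSPW}. In short, the missing idea is to promote the first-order hypothesis on $|\nabla\ln f|$ to a second-order differential inequality for $|\nabla\ln f|$ itself via the Bochner formula, rather than trying to feed that hypothesis into a maximum principle for $\ln f$.
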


\section{Ricci-Hessian type manifolds}\label{Section2}
Let $(B^n,g_B,e^{-h}d\mathrm{vol})$ be an $n$-dimensional weighted manifold, where $h$ is a smooth function on $B$ and $d\mathrm{vol}$ is the Riemannian volume density on $(B^n,g_B)$. A natural extension of the Ricci tensor to weighted Riemannian manifolds is the {\it Bakry-Emery Ricci tensor}
\begin{equation*}
Ric_h=Ric+\nabla^2h.
\end{equation*}
A modification this equation is known as {\it $m$-Bakry-Emery Ricci tensor}, namely
\begin{equation*}
Ric^m_{h}=Ric+\nabla^2h-\frac{1}{m}dh\otimes dh,
\end{equation*}
for some $0<m\leq+\infty$. In particular, an $m$-quasi-Einstein metric $g_B$ satisfies $\ric^m_{h}=\lambda g_B$, for some constant $\lambda$. Besides, by introducing the function $f=e^{-\frac{h}{m}}$, with $m<+\infty$, one has
\begin{equation*}
Ric^m_{h}=Ric-\frac{m}{f}\nabla^2f.
\end{equation*}

It is known that the class of $m$-quasi-Einstein metrics is the most appropriate setting to study Einstein warped products, see, e.g., the results by Rimoldi~\cite{Rimoldi}.

Throughout this note the primary geometric object associated with our results is a complete Riemannian manifold $(B^n,g_B)$ with two smooth functions $f>0$ and $\varphi$ on $B$. We can further consider the weighted Riemannian manifold $(B^n,g_B,e^{-\psi}d\mathrm{vol})$, where $\psi$ is the smooth function on $B$ given by $\psi=\varphi-m\ln f$, for $0<m<+\infty$. In this case, we have
\begin{equation}\label{ModifiedRicci}
Ric_\psi=Ric+\nabla^2\varphi-\frac{m}{f}\nabla^2f+\frac{m}{f^2}df\otimes df.
\end{equation}

For our purposes, we define the following modification of the $m$-Bakry-Emery Ricci tensor
\begin{equation*}
Ric_{\varphi,h,m}:=Ric^m_h+\nabla^2\varphi.
\end{equation*}

An interesting case occurs when the Riemannian metric $g_B$ satisfies
\begin{equation*}
Ric_{\varphi,\xi,m}=\lambda g_B,
\end{equation*}
for some smooth functions $\xi=-m\ln f$ and $\lambda$ on $B$, or equivalently
\begin{align}\label{SuffCondition1}
Ric+\nabla^2\varphi-\frac{m}{f}\nabla^2f=\lambda g_B.
\end{align}

To give a motivation of where the study of this equation comes from, we refer to Maschler's work~\cite{Maschler} in which he was interested in conformal changes of Kähler-Ricci solitons in order to obtain new Kähler-metrics. To do this, he introduced the notion of {\it Ricci-Hessian equation}, namely
\begin{equation*}
\ric+\alpha\nabla^2h=\beta g,
\end{equation*}
where $\alpha$ and $\beta$ are smooth functions. Feitosa et al.~\cite[Remark~1]{Feitosa et al.} showed how Eq.~\eqref{SuffCondition1} can be reduced to a Ricci-Hessian equation. Example~1 in~\cite{FFG} shows that the standard sphere and the hyperbolic space both satisfy Eq.~\eqref{SuffCondition1} for nontrivial functions. The same authors constructed a nontrivial example of \eqref{SuffCondition1} on $\Bbb{R}^n$ with parameter functions invariant under the action of an $(n-1)$-dimensional translation group, see Corollary~1 in~\cite{Feitosa et al.}.

Our main motivation to study Eq.~\eqref{SuffCondition1} is that the base spaces of gradient Ricci soliton warped products satisfy the referred equation for a  positive integer $m$ and a constant $\lambda$. To see this, we briefly describe the construction by Feitosa et al.~\cite{FFG}. They considered a complete Riemannian manifold $(B^n,g_B)$ with two parameters smooth functions $f>0$ and $\varphi$ satisfying Eq.~\eqref{SuffCondition1} and
\begin{equation}\label{SuffCondition2}
2\lambda\varphi-|\nabla\varphi|^2+\Delta\varphi+\frac{m}{f}\nabla\varphi(f)=c,
\end{equation}
for some constants $\lambda, m, c\in\Bbb{R}$, with $m\neq0$. By \cite[Proposition~3]{FFG}, the functions $f$ and $\varphi$ satisfy
\begin{equation}\label{SuffCondition3}
\lambda f^2+f\Delta f+(m-1)|\nabla f|^2-f\nabla\varphi(f)=\mu,
\end{equation}
for some constant $\mu\in\mathbb{R}$. In summary, we will need the following result which is a compilation of Proposition~2 and Theorem~3 in~\cite{FFG}.

\begin{proposition}[Feitosa et al.~\cite{FFG}]\label{CharacterizationGRSPW}
Let $M=B^n\times_f F^m$ be a gradient Ricci soliton warped product with potential function $\tilde{\varphi}$. Then, Eqs.~\eqref{SuffCondition1} and \eqref{SuffCondition2} hold on $B$ and the fiber $F$ is an Einstein manifold with Ricci tensor $\ric_F=\mu g_F$, where $\mu$ is given by Eq.~\eqref{SuffCondition3}. Conversely, let $B$ be a complete Riemannian manifold with two smooth functions $\varphi$ and $f>0$ satisfying Eqs.~\eqref{SuffCondition1} and \eqref{SuffCondition2}, for any $\lambda\in\Bbb{R}$. Take the constant $\mu$ given by Eq.~\eqref{SuffCondition3} and a complete Riemannian manifold $F$ of dimension $m$ and Ricci tensor $\ric_F=\mu g_F$. Then, we can construct a gradient Ricci soliton warped product $B^n\times_f F^m$ with potential function $\tilde{\varphi}$.
\end{proposition}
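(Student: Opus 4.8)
The plan is to read off Eqs.~\eqref{SuffCondition1}--\eqref{SuffCondition3} from the blockwise decomposition of the soliton equation $\ric_M+\nabla^2\tilde\varphi=\lambda g_M$ on $M=B^n\times_f F^m$, using the classical warped-product curvature identities (see O'Neill~\cite{oneill}). First I would record that for $X,Y$ tangent to $B$ and $V,W$ tangent to $F$,
\begin{align*}
\ric_M(X,Y)&=\ric_B(X,Y)-\tfrac{m}{f}\nabla^2 f(X,Y),\qquad \ric_M(X,V)=0,\\
\ric_M(V,W)&=\ric_F(V,W)-\big(f\Delta f+(m-1)|\nabla f|^2\big)g_F(V,W),
\end{align*}
and that the lift $\tilde\varphi$ of $\varphi\in C^\infty(B)$ has $\nabla^2\tilde\varphi(X,Y)=\nabla^2\varphi(X,Y)$, $\nabla^2\tilde\varphi(X,V)=0$, and $\nabla^2\tilde\varphi(V,W)=f\,\nabla\varphi(f)\,g_F(V,W)$. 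Substituting these into $\ric_M+\nabla^2\tilde\varphi=\lambda g_M$ and recalling $g_M(V,W)=f^2 g_F(V,W)$: the $(X,Y)$-block is exactly Eq.~\eqref{SuffCondition1}, the $(X,V)$-block holds automatically, and the $(V,W)$-block becomes
\begin{equation*}
\ric_F(V,W)=\big(\lambda f^2+f\Delta f+(m-1)|\nabla f|^2-f\,\nabla\varphi(f)\big)\,g_F(V,W).
\end{equation*}
The left-hand side depends only on the point of $F$ and the coefficient only on the point of $B$, so a separation-of-variables argument forces the coefficient to be a constant $\mu$ and $F$ to be Einstein with $\ric_F=\mu g_F$; that constant is precisely the $\mu$ of Eq.~\eqref{SuffCondition3}.

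To get Eq.~\eqref{SuffCondition2} I would invoke the scalar soliton identity: the contracted second Bianchi identity applied to $\ric_M+\nabla^2\tilde\varphi=\lambda g_M$ yields $\mathrm{scal}_M+|\nabla\tilde\varphi|_M^2-2\lambda\tilde\varphi\equiv c$ for a constant $c$. Now $|\nabla\tilde\varphi|_M^2=|\nabla\varphi|^2$, $\tilde\varphi=\varphi$, and inserting the warped-product formula $\mathrm{scal}_M=\mathrm{scal}_B-\tfrac{2m}{f}\Delta f-\tfrac{m(m-1)}{f^2}|\nabla f|^2+\tfrac{1}{f^2}\mathrm{scal}_F$, the trace of Eq.~\eqref{SuffCondition1}, and Eq.~\eqref{SuffCondition3} collapses $\mathrm{scal}_M$ to $(n+m)\lambda-\Delta\varphi-\tfrac{m}{f}\nabla\varphi(f)$. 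Plugging this in gives $2\lambda\varphi-|\nabla\varphi|^2+\Delta\varphi+\tfrac{m}{f}\nabla\varphi(f)=(n+m)\lambda-c$, which is Eq.~\eqref{SuffCondition2}. This settles the direct implication.

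For the converse I would run the decomposition backwards. Given $(B^n,g_B)$ complete and $\varphi,f>0$ on $B$ satisfying Eqs.~\eqref{SuffCondition1} and \eqref{SuffCondition2}, the crucial point is that the function $\lambda f^2+f\Delta f+(m-1)|\nabla f|^2-f\,\nabla\varphi(f)$ figuring in Eq.~\eqref{SuffCondition3} is in fact constant: one differentiates it and removes the third-order terms using Bochner's formula, the identity $\dv(\nabla^2 f)=d(\Delta f)+\ric(\nabla f,\cdot)$, and Eqs.~\eqref{SuffCondition1}--\eqref{SuffCondition2}; this is exactly~\cite[Proposition~3]{FFG}. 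Calling that constant $\mu$ and choosing any complete $m$-dimensional Einstein manifold $(F^m,g_F)$ with $\ric_F=\mu g_F$ (a round sphere, Euclidean space, or hyperbolic space depending on the sign of $\mu$), one verifies on $M=B^n\times_f F^m$, block by block as above, that $\ric_M+\nabla^2\tilde\varphi=\lambda g_M$: the horizontal block is Eq.~\eqref{SuffCondition1}, the mixed block is trivial, and the vertical block holds precisely because $\mu$ was defined by Eq.~\eqref{SuffCondition3}. Since a warped product of complete manifolds with a positive warping function is complete, $M$ is a gradient Ricci soliton warped product with potential $\tilde\varphi$.

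I expect the main obstacle to be the converse: showing that the right-hand side of Eq.~\eqref{SuffCondition3} is genuinely a constant is the only step that is not pure linear algebra on the curvature decomposition, and it is the point where Eq.~\eqref{SuffCondition2} is actually used. A minor additional point is the separation-of-variables step when $m=1$, where the Einstein condition is vacuous and Eq.~\eqref{SuffCondition3} merely forces $\mu=0$; and throughout one must be careful with the conformal factor $f^2$ in the vertical directions and with the sign conventions for $\Delta$ and $\nabla^2$.
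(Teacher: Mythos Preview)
The paper does not give its own proof of this proposition; it is quoted verbatim as ``a compilation of Proposition~2 and Theorem~3 in~\cite{FFG}''. Your argument is correct and is essentially the proof given in~\cite{FFG}: decompose $\ric_M+\nabla^2\tilde\varphi=\lambda g_M$ block by block using the O'Neill warped-product identities, read off Eq.~\eqref{SuffCondition1} from the horizontal block, obtain Eq.~\eqref{SuffCondition3} together with the Einstein condition on $F$ from the vertical block by separation of variables, and derive Eq.~\eqref{SuffCondition2} by restricting Hamilton's scalar identity $\mathrm{scal}_M+|\nabla\tilde\varphi|^2-2\lambda\tilde\varphi=\mathrm{const}$ to the base. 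For the converse, the only step that is not linear algebra on the curvature decomposition---constancy of the right-hand side of Eq.~\eqref{SuffCondition3}---is precisely~\cite[Proposition~3]{FFG}, as you note.

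One small correction to your $m=1$ remark: it is the one-dimensionality of $F$ (hence $\ric_F=0$) that forces $\mu=0$; Eq.~\eqref{SuffCondition3} then becomes a genuine constraint $\lambda f^2+f\Delta f-f\nabla\varphi(f)=0$ on the base data, not a triviality.
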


We now define precisely the main geometric object of study in this work, which stems from Proposition~\ref{CharacterizationGRSPW}.

A {\it Ricci-Hessian type manifold} is a complete Riemannian manifold $(B^n,g_B)$ with two smooth functions $f>0$ and $\varphi$ satisfying Eq.~\eqref{SuffCondition1}, for a positive integer $m$ and a constant $\lambda$.

For simplicity, we will say that $(B^n,g_B,\psi)$ is a Ricci-Hessian type manifold, where $\psi=\varphi-m\ln f$. Moreover, we will use the weighted Riemannian volume density $e^{-\psi}d\mathrm{vol}$ on $(B^n,g_B,\psi)$ whenever it is needed.

We continue by fixing notation and making comments about facts that will be used henceforth.

One standard way to address problems such as those proposed in our introduction is by using three classical tools in Riemannian geometry. The first tool we write in Lemma~\ref{Bochner}, which provides a Bochner-Weitzenböck type formula in terms of suitable geometric tensor on $(B^n,g_B,\psi)$. The traditional Bochner-Weitzenböck formula allows us to get a useful identity in the gradient Ricci soliton theory, namely
\begin{equation}\label{BochnerRich}
\frac{1}{2}\Delta_\psi|\nabla u|^2=|\nabla^2u|^2+\langle\nabla u,\nabla\Delta_\psi u\rangle+\ric_\psi(\nabla u,\nabla u),
\end{equation}
where $\Delta_\psi u:= \Delta u-\langle\nabla\psi,\nabla u\rangle$ is called  $\psi$-Laplacian. The second tool we state in Proposition~\ref{WMP}, which provides a {\it weak maximum principle at infinity} for a $C^2$-function $u:B\to\Bbb R$ satisfying $\sup_Bu=u^{\ast}<+\infty$. The validity of this principle ensures that there exists a sequence $\{x_k\}\subset B$ satisfying, for all $k\in\mathbb{N}$:
\begin{equation*}
(i)\ u(x_k)\geq u^{\ast}-\frac{1}{k} \quad \mbox{and} \quad (ii)\ (\Delta_\psi u)(x_k)\leq\frac{1}{k}.
\end{equation*}

The third tool is a $L^p$-Liouville type result associated to suitable drifting function. Here we will need the following version described below for the $\psi$-Laplacian. This result is an immediate consequence of \cite[Theorem~14 and Remark~15]{Pigola et al.}.

\begin{proposition}\label{Liouville}
Let $(B^n,g_B,\psi)$ be a Ricci-Hessian type manifold, and let us consider a function $u\in Lip_{loc}(B^n)$. If $u\in L^p(B^n,e^{-\psi}d\mathrm{vol})$, for some $1<p<+\infty$, and $\Delta_{\psi}u\geq0$ in the weak sense, then either $u$ is a constant or $u\leq 0$.
\end{proposition}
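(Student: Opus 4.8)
The plan is to reduce the statement to the classical $L^p$-Liouville theorem for \emph{nonnegative} subharmonic functions, applied to the weighted Laplacian $\Delta_\psi$ on the complete weighted manifold $(B^n,g_B,e^{-\psi}d\mathrm{vol})$, and then quote \cite[Theorem~14 and Remark~15]{Pigola et al.}. First I would set $u^{\ast}:=\sup_B u$; if $u^{\ast}\leq 0$ there is nothing to prove, so assume $u^{\ast}>0$ and aim to show that $u$ is constant. The natural object to work with is the positive part $u_{+}:=\max\{u,0\}$. Since $u\in Lip_{loc}(B^n)$ so is $u_{+}$, and $0\leq u_{+}\leq|u|$ gives $u_{+}\in L^p(B^n,e^{-\psi}d\mathrm{vol})$. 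The point on which everything hinges is a Kato-type inequality: the positive part of a weakly $\Delta_\psi$-subharmonic function is again weakly $\Delta_\psi$-subharmonic. For the Laplace--Beltrami operator this is standard, and the drift term $-\langle\nabla\psi,\nabla\,\cdot\,\rangle$ is handled identically since $\nabla u_{+}=\nabla u$ a.e. on $\{u>0\}$ and $\nabla u_{+}=0$ a.e. on $\{u\leq 0\}$.

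With this in hand, the weighted $L^p$-Liouville theorem of Pigola et al.\ applies: its only hypothesis is geodesic completeness of $B^n$, which is built into the definition of a Ricci-Hessian type manifold, so $u_{+}$ must be a constant $c\geq 0$. Because $u^{\ast}>0$ we get $c=u^{\ast}>0$; then $u_{+}(x)=c>0$ for every $x$ forces $u(x)=c$ for every $x$, i.e. $u\equiv c$ is constant. This is exactly the dichotomy ``$u$ constant or $u\leq 0$''.

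For a self-contained argument one can instead reprove the $L^p$-Liouville step directly by a Moser-type integration by parts: test $\Delta_\psi u\geq 0$ against $\eta^{2}u_{+}^{p-1}$, where $\eta$ is a standard cutoff with $\eta\equiv 1$ on the geodesic ball $B_R$, $\operatorname{supp}\eta\subset B_{2R}$ and $|\nabla\eta|\leq C/R$; integrating by parts against $e^{-\psi}d\mathrm{vol}$ and absorbing the cross term with Young's inequality yields
\[
\int_{B_R}u_{+}^{\,p-2}|\nabla u_{+}|^{2}\,e^{-\psi}d\mathrm{vol}\ \leq\ \frac{C_p}{R^{2}}\int_{B_{2R}}u_{+}^{\,p}\,e^{-\psi}d\mathrm{vol}\ \xrightarrow[R\to+\infty]{}\ 0,
\]
so $\nabla u_{+}\equiv 0$ on $\{u>0\}$ and connectedness of $B^n$ finishes it. In either route the main obstacle is the regularity bookkeeping: justifying the Kato inequality and the distributional integration by parts for the merely locally Lipschitz $u$, and carrying out the mild regularization (replacing $u_{+}$ by $(u_{+}^{2}+\delta)^{1/2}$, or $u_{+}$ by $u_{+}+\varepsilon$) that is needed when $1<p<2$ so that $u_{+}^{\,p-2}$ is paired with a genuinely controlled quantity. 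Since precisely this technical content is packaged in \cite[Theorem~14 and Remark~15]{Pigola et al.}, the cleanest exposition is to invoke that result right after the reduction to $u_{+}$ described above.
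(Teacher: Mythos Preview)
Your proposal is correct and matches the paper's approach: the paper does not give an independent proof but simply states that the proposition ``is an immediate consequence of \cite[Theorem~14 and Remark~15]{Pigola et al.}'', which is exactly the reference you invoke after your reduction to $u_{+}$. Your additional detail (the Kato-type passage to $u_{+}$ and the optional Moser iteration) just makes explicit what the paper leaves implicit, and indeed the paper itself uses the very same $\varphi_{+}=\max\{\varphi,0\}$ trick later in the proof of Proposition~\ref{PreliminaryResults}\eqref{PreliminaryResults-a}.
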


To establish the first two tools, we use the very convenient Eq.~\eqref{ModifiedRicci}, which is equivalent to
\begin{equation}\label{RelationRicci}
\ric_{\psi}=\ric_{\varphi,\xi,m}+\frac{m}{f^2}df\otimes df.
\end{equation}

As an immediate consequence of Eqs.~\eqref{BochnerRich} and \eqref{RelationRicci}, we obtain the required Bochner-Weitzenböck type formula associated with $\ric_{\varphi,\xi,m}$.

\begin{lemma}\label{Bochner}
Let $(B^n,g_B,\psi)$ be a Ricci-Hessian type manifold. For any smooth function $u$ on $B$ the following holds:
\begin{equation*}
\frac{1}{2}\Delta_{\psi}|\nabla u|^2=|\nabla^2u|^2+\langle\nabla u,\nabla\Delta_{\psi}u\rangle+\ric_{\varphi,\xi,m}(\nabla u,\nabla u)+\frac{m}{f^2}\langle\nabla f,\nabla u\rangle^2.
\end{equation*}
\end{lemma}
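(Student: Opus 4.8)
The plan is to deduce the stated formula directly from the weighted Bochner--Weitzenb\"ock identity \eqref{BochnerRich} together with the pointwise tensor identity \eqref{RelationRicci}; no genuinely new computation is needed beyond these two ingredients.

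First I would apply \eqref{BochnerRich} to the given smooth function $u$ on $B$, which is legitimate since $(B^n,g_B,e^{-\psi}d\mathrm{vol})$ is a weighted manifold and $\Delta_\psi u=\Delta u-\langle\nabla\psi,\nabla u\rangle$ is its drifting Laplacian; this gives
\[
\frac{1}{2}\Delta_{\psi}|\nabla u|^2=|\nabla^2u|^2+\langle\nabla u,\nabla\Delta_{\psi}u\rangle+\ric_{\psi}(\nabla u,\nabla u).
\]
Next I would substitute \eqref{RelationRicci}, i.e. $\ric_{\psi}=\ric_{\varphi,\xi,m}+\frac{m}{f^2}\,df\otimes df$, evaluated on the pair $(\nabla u,\nabla u)$. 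Because $(df\otimes df)(\nabla u,\nabla u)=(df(\nabla u))^2=\langle\nabla f,\nabla u\rangle^2$, this replaces $\ric_{\psi}(\nabla u,\nabla u)$ by $\ric_{\varphi,\xi,m}(\nabla u,\nabla u)+\frac{m}{f^2}\langle\nabla f,\nabla u\rangle^2$, which is exactly the claimed identity.

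For completeness I would also record a short verification of the two inputs. Identity \eqref{RelationRicci} is a regrouping of \eqref{ModifiedRicci}: with $\xi=-m\ln f$ one has $\nabla^2\xi=-\frac{m}{f}\nabla^2f+\frac{m}{f^2}df\otimes df$ and $\frac{1}{m}\,d\xi\otimes d\xi=\frac{m}{f^2}df\otimes df$, so $\ric_{\varphi,\xi,m}=\ric^m_\xi+\nabla^2\varphi=\ric+\nabla^2\varphi-\frac{m}{f}\nabla^2f$; on the other hand $\ric_\psi=\ric+\nabla^2\psi$ with $\nabla^2\psi=\nabla^2\varphi-\frac{m}{f}\nabla^2f+\frac{m}{f^2}df\otimes df$, and comparing the two expressions yields \eqref{RelationRicci}. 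The classical weighted formula \eqref{BochnerRich} itself follows from the usual Bochner--Weitzenb\"ock formula by moving the Hessian term $\nabla^2\psi(\nabla u,\nabla u)$ into the Ricci tensor and absorbing $-\langle\nabla\psi,\nabla u\rangle$ into the Laplacian. There is essentially no analytic obstacle here; the only point requiring care is the bookkeeping of the term $\frac{m}{f^2}df\otimes df$ as one passes among the tensors $\ric_\psi$, $\ric^m_\xi$ and $\ric_{\varphi,\xi,m}$.
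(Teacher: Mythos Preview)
Your proof is correct and follows exactly the approach indicated in the paper, which simply notes that the lemma is an immediate consequence of Eqs.~\eqref{BochnerRich} and \eqref{RelationRicci}. Your additional verification of these two inputs is more detailed than what the paper records, but the core argument is identical.
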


From Eq.~\eqref{RelationRicci} we also obtain the validity of the weak maximum principle at infinity for $\Delta_{\psi}$ on Ricci-Hessian type manifolds $(B^n,g_B,\psi)$. Indeed, if we assume $\ric_{\varphi,\xi,m}=\lambda g_B$, then $\ric_{\psi}\geq\lambda g_B$. Thus, by using \cite[Theorem~4.1]{WeiWylie} and \cite[Theorem~9]{Pigola et al.} one readily has the following proposition.

\begin{proposition}\label{WMP}
The weak maximum principle at infinity for the $\psi$-Laplacian holds on a Ricci-Hessian type manifold $(B^n,g_B,\psi)$.
\end{proposition}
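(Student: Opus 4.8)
The plan is to reduce the statement to the comparison and maximum-principle machinery for weighted manifolds already available in the literature, the only input from the Ricci-Hessian type structure being that it forces a uniform lower bound on the Bakry-Emery Ricci tensor attached to the weight $\psi$.

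First I would unwind the definitions. By hypothesis $(B^n,g_B,\psi)$ satisfies Eq.~\eqref{SuffCondition1}, i.e. $\ric_{\varphi,\xi,m}=\lambda g_B$ with $\xi=-m\ln f$ and $\lambda$ a real constant. Substituting this into the algebraic identity \eqref{RelationRicci} gives
\begin{equation*}
\ric_\psi=\lambda g_B+\frac{m}{f^2}\,df\otimes df .
\end{equation*}
Since $m$ is a positive integer and the symmetric $2$-tensor $df\otimes df$ is positive semidefinite, this yields the pointwise estimate $\ric_\psi\geq\lambda g_B$ on all of $B$; equivalently, the weighted manifold $(B^n,g_B,e^{-\psi}d\mathrm{vol})$ has Bakry-Emery Ricci curvature bounded below by the constant $\lambda$.

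Next I would feed this lower bound into the comparison theory. A constant lower bound for $\ric_\psi$ places us within the scope of the weighted Laplacian and volume comparison of Wei and Wylie~\cite[Theorem~4.1]{WeiWylie}, which produces an upper estimate for $\Delta_\psi r$ --- equivalently, a growth bound for the $e^{-\psi}d\mathrm{vol}$-volume of geodesic balls centered at a fixed point --- of precisely the type required by the general criterion of Pigola et al.~\cite[Theorem~9]{Pigola et al.} for the validity of the weak maximum principle at infinity for a drifting Laplacian. Combining the two gives the statement: every $u\in C^2(B)$ with $u^{\ast}:=\sup_B u<+\infty$ admits a sequence $\{x_k\}\subset B$ with $u(x_k)\geq u^{\ast}-\frac1k$ and $(\Delta_\psi u)(x_k)\leq\frac1k$ for all $k$.

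The step I expect to be the main obstacle --- or at least the only non-formal point --- is the middle one. A constant lower bound on the $\infty$-Bakry-Emery tensor is genuinely weaker than a Ricci lower bound and does not, by itself, control the \emph{Riemannian} volume growth, so the classical Omori-Yau theory cannot be invoked directly; moreover $\psi$ (and $|\nabla\psi|$) need not be bounded here, as already the gradient Ricci soliton case shows. What makes things work is that the Wei-Wylie comparison is adapted to the weighted setting, measuring balls with the density $e^{-\psi}d\mathrm{vol}$, and the care needed is simply to check that the growth rate it delivers under a merely constant lower bound matches the structural hypothesis (a bound $\ric_\psi\geq-(n-1)G^2$ with $G$ constant, together with the associated weighted-volume growth) in the criterion of Pigola et al. Once this bookkeeping is done the conclusion follows at once.
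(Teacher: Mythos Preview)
Your proposal is correct and follows essentially the same route as the paper: the paper also uses \eqref{RelationRicci} together with $\ric_{\varphi,\xi,m}=\lambda g_B$ to deduce $\ric_\psi\geq\lambda g_B$, and then invokes \cite[Theorem~4.1]{WeiWylie} and \cite[Theorem~9]{Pigola et al.} in exactly this order. Your additional commentary on why the weighted (rather than Riemannian) comparison is the right tool is accurate and simply makes explicit what the paper leaves implicit.
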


More generally, we note that the previous weak maximum principle at infinity holds for weighted Riemannian manifolds $(B^n,g_B,e^{-\psi}d\mathrm{vol})$ satisfying $\ric_{\varphi,\xi,m}\geq\lambda g_B$.

\section{Ricci-Hessian type manifolds satisfying Eq.~\eqref{SuffCondition2}}\label{Section2.2}

In this section, we elaborate the proofs of the main theorems of this note. First of all, we restrict ourselves to Ricci-Hessian type metrics $g_B$ on $B$ as being a necessary condition to construct gradient Ricci soliton warped metrics, which means that they must satisfy the additional Eq.~\eqref{SuffCondition2} on $(B,g_B)$. In particular, when $f$ is a constant, a Ricci-Hessian type metric reduces to a gradient Ricci soliton which has been carefully studied by Hamilton~\cite{Hamilton}, in this case, Eq.~\eqref{SuffCondition2} is known as a Hamilton's equation, and Eq.~\eqref{SuffCondition3} is trivially satisfied. Taking $\varphi$ to be constant instead of $f$, we have an $m$-quasi Einstein metric, which is originates from the study of Einstein warped metrics, see Kim and Kim~\cite{DKim-YKim} or Besse~\cite{besse}, from which we know that Eq.~\eqref{SuffCondition3} is satisfied, while Eq.~\eqref{SuffCondition2} is trivially satisfied. Thus, this additional assumption on the Ricci-Hessian type manifolds is natural and very convenient.

We start by using Eqs.~\eqref{SuffCondition1}-\eqref{SuffCondition3} to obtain the following lemma.
\begin{lemma}\label{RewrittenEquations}
Let $(B^n,g_B,\psi)$ be a Ricci-Hessian type manifold satisfying Eq.~\eqref{SuffCondition2}. Then,
\begin{align*}
&\Delta\psi=n\lambda-S+m|\nabla\ln f|^2,\\
&\Delta_{\psi}\varphi=c-2\lambda\varphi,\\
&\Delta_{\psi}\ln f=\frac{1}{f^2}(\mu-\lambda f^2).
\end{align*}
\end{lemma}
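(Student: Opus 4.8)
The plan is to derive the three identities by taking traces and combining Eqs.~\eqref{SuffCondition1}–\eqref{SuffCondition3}. First I would tackle the formula for $\Delta\psi$. Tracing Eq.~\eqref{SuffCondition1} with respect to $g_B$ gives $S+\Delta\varphi-\frac{m}{f}\Delta f=n\lambda$, where $S$ is the scalar curvature of $g_B$. Since $\psi=\varphi-m\ln f$ and $\Delta\ln f=\frac{1}{f}\Delta f-|\nabla\ln f|^2$, I would substitute $\Delta\varphi=\Delta\psi+m\Delta\ln f=\Delta\psi+\frac{m}{f}\Delta f-m|\nabla\ln f|^2$ into the traced equation; the $\frac{m}{f}\Delta f$ terms cancel and rearranging yields $\Delta\psi=n\lambda-S+m|\nabla\ln f|^2$, as claimed.

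Next, for the second identity, I would compute directly from the definition $\Delta_\psi\varphi=\Delta\varphi-\langle\nabla\psi,\nabla\varphi\rangle$. Writing $\nabla\psi=\nabla\varphi-m\nabla\ln f$, this is $\Delta\varphi-|\nabla\varphi|^2+m\langle\nabla\ln f,\nabla\varphi\rangle=\Delta\varphi-|\nabla\varphi|^2+\frac{m}{f}\nabla\varphi(f)$. But the left-hand side of Eq.~\eqref{SuffCondition2} is exactly $2\lambda\varphi+\big(\Delta\varphi-|\nabla\varphi|^2+\frac{m}{f}\nabla\varphi(f)\big)=2\lambda\varphi+\Delta_\psi\varphi$, so Eq.~\eqref{SuffCondition2} reads $2\lambda\varphi+\Delta_\psi\varphi=c$, which is precisely $\Delta_\psi\varphi=c-2\lambda\varphi$.

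For the third identity, I would start from Eq.~\eqref{SuffCondition3}: $\lambda f^2+f\Delta f+(m-1)|\nabla f|^2-f\nabla\varphi(f)=\mu$. Dividing by $f^2$ gives $\lambda+\frac{\Delta f}{f}+(m-1)\frac{|\nabla f|^2}{f^2}-\frac{1}{f}\nabla\varphi(f)=\frac{\mu}{f^2}$. Now I use $\Delta\ln f=\frac{\Delta f}{f}-|\nabla\ln f|^2$ and $\langle\nabla\psi,\nabla\ln f\rangle=\langle\nabla\varphi,\nabla\ln f\rangle-m|\nabla\ln f|^2=\frac{1}{f}\nabla\varphi(f)-m|\nabla\ln f|^2$, so that
\[
\Delta_\psi\ln f=\Delta\ln f-\langle\nabla\psi,\nabla\ln f\rangle=\frac{\Delta f}{f}-|\nabla\ln f|^2-\frac{1}{f}\nabla\varphi(f)+m|\nabla\ln f|^2=\frac{\Delta f}{f}+(m-1)\frac{|\nabla f|^2}{f^2}-\frac{1}{f}\nabla\varphi(f).
\]
Comparing with the rearranged Eq.~\eqref{SuffCondition3} gives $\Delta_\psi\ln f=\frac{\mu}{f^2}-\lambda=\frac{1}{f^2}(\mu-\lambda f^2)$, as desired.

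None of these steps presents a genuine obstacle; the only point requiring care is bookkeeping the $\frac{m}{f}\Delta f$ versus $m|\nabla\ln f|^2$ terms when converting between $\Delta\varphi$, $\Delta\psi$, and $\Delta\ln f$, and correctly identifying the left side of Eq.~\eqref{SuffCondition2} with $2\lambda\varphi+\Delta_\psi\varphi$. I would double-check the signs in the Weitzenböck-free computation $\Delta_\psi u=\Delta u-\langle\nabla\psi,\nabla u\rangle$ against the definition given in the text.
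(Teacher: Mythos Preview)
Your proof is correct and matches the paper's approach: the paper does not supply a detailed argument but simply states that the lemma is obtained by using Eqs.~\eqref{SuffCondition1}--\eqref{SuffCondition3}, which is exactly what you do. The computations are all accurate, including the bookkeeping for $\Delta\ln f$ and the identification of the left-hand side of Eq.~\eqref{SuffCondition2} with $2\lambda\varphi+\Delta_\psi\varphi$.
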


We are now in position to prove the first result in our study of triviality and nonexistence of gradient Ricci soliton warped products.

\begin{proposition}\label{PreliminaryResults}
Let $(B^n,g_B,\psi)$ be a Ricci-Hessian type manifold satisfying Eq.~\eqref{SuffCondition2}. Suppose $\lambda\leq0$, the following holds:
\begin{enumerate}
\item\label{PreliminaryResults-a} If $\varphi\in L^{p}(B^n,e^{-\psi}d\mathrm{vol})$, for some $1<p<+\infty$, then $\varphi$ does not change sign on $B^n$.
\item\label{PreliminaryResults-b} For $\lambda=0$ and $\mu\geq0$, if the function $f$  satisfies the following conditions: either $ f\in L^p(B^n,e^{-\psi}d\mathrm{vol})$, for some $1< p<+\infty$, or $ f\in L^1(B^n,e^{-\psi}d\mathrm{vol})$ and $f(x)=O(e^{\alpha r(x)^{2-\epsilon}})$ as  $r(x)\rightarrow +\infty$, for some constants $\alpha,\epsilon>0$, then $f$ must be constant and $e^{-\varphi}\in L^1(B,d\mathrm{vol})$.
\item\label{PreliminaryResults-c} For $\lambda<0$ and $\mu\geq0$, there is no such a Ricci-Hessian type manifold provided that $f$ satisfies the following conditions: either  $f\in L^p(B^n,e^{-\psi}d\mathrm{vol})$, for some $1<p\leq+\infty$, or $f\in L^1(B^n,e^{-\psi}d\mathrm{vol})$ and $f(x)=O(e^{\alpha r(x)^{2-\epsilon}})$ as  $r(x)\rightarrow +\infty$, for some constants $\alpha,\epsilon>0$.
\end{enumerate}
\end{proposition}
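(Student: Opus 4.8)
The plan is to combine Lemma~\ref{RewrittenEquations} with the two Liouville-type tools already in hand — Proposition~\ref{Liouville} for the finite exponents $1<p<+\infty$ and the weak maximum principle at infinity of Proposition~\ref{WMP} for the sup-norm. I would first record what Lemma~\ref{RewrittenEquations} gives directly: using $\Delta_{\psi}\ln f=\tfrac1{f^{2}}(\mu-\lambda f^{2})$ together with $\Delta_{\psi}\ln f=\tfrac{\Delta_{\psi}f}{f}-\tfrac{|\nabla f|^{2}}{f^{2}}$, one obtains
\[
\Delta_{\psi}f=\frac{\mu+|\nabla f|^{2}}{f}-\lambda f .
\]
Under the standing hypotheses $\lambda\le 0$ and (for (b) and (c)) $\mu\ge 0$ this says $\Delta_{\psi}f\ge-\lambda f\ge 0$ everywhere, with the strict bound $\Delta_{\psi}f\ge-\lambda f>0$ when $\lambda<0$; similarly $\Delta_{\psi}\ln f=\tfrac{\mu}{f^{2}}-\lambda\ge-\lambda$. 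I will also use that on a Ricci-Hessian type manifold $\ric_{\psi}\ge\lambda g_{B}$, which is immediate from Eq.~\eqref{RelationRicci}.

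For item~(a) I would instead use $\Delta_{\psi}\varphi=c-2\lambda\varphi$ and split on the sign of the constant $c$. If $c\ge 0$, then on the open set $\{\varphi>0\}$ we have $\Delta_{\psi}\varphi=c-2\lambda\varphi\ge 0$, since $-2\lambda\varphi\ge 0$ there; by Kato's inequality the locally Lipschitz function $\varphi_{+}:=\max\{\varphi,0\}$ then satisfies $\Delta_{\psi}\varphi_{+}\ge 0$ weakly, and $0\le\varphi_{+}\le|\varphi|$ shows $\varphi_{+}\in L^{p}(B^{n},e^{-\psi}d\mathrm{vol})$. Proposition~\ref{Liouville} forces $\varphi_{+}$ to be a constant or $\varphi_{+}\le 0$, so $\varphi$ is either a constant or everywhere nonpositive; in particular it does not change sign. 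If $c\le 0$, the same argument applied to $-\varphi$ — note $\Delta_{\psi}(-\varphi)=-c+2\lambda\varphi\ge 0$ on $\{\varphi<0\}$ — gives the conclusion. Since every real $c$ lies in one of these ranges, (a) follows.

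Items~(b) and (c) both hinge on $\Delta_{\psi}f\ge 0$ with $f>0$, and on identifying $f$ as constant in each of the three integrability regimes. If $f\in L^{p}(B^{n},e^{-\psi}d\mathrm{vol})$ for some $1<p<+\infty$, Proposition~\ref{Liouville} applied to $u=f$, together with $f>0$ (which rules out the alternative $f\le 0$), forces $f$ constant. If $f\in L^{\infty}$ (the case $p=+\infty$, relevant only for (c)), then $f$ is bounded, so $\ln f$ is bounded above, and Proposition~\ref{WMP} applied to $\ln f$ yields points $x_{k}$ with $(\Delta_{\psi}\ln f)(x_{k})\le 1/k$. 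If $f\in L^{1}(B^{n},e^{-\psi}d\mathrm{vol})$ and $f(x)=O(e^{\alpha r(x)^{2-\epsilon}})$, I would invoke a Karp-type $L^{1}$-Liouville theorem for the $\psi$-Laplacian — legitimate here because $\ric_{\psi}\ge\lambda g_{B}$ supplies the needed weighted volume bound and the growth restriction on $f$ is precisely what makes the cutoff estimate close — to conclude again that $f$ is constant. Now for (b), where $\lambda=0$: once $f\equiv f_{0}$ is a positive constant, $\Delta_{\psi}f=\mu/f_{0}=0$ yields $\mu=0$, and since $e^{-\psi}=f_{0}^{m}e^{-\varphi}$ the hypothesis $f\in L^{p}$ (resp. $L^{1}$) reads $f_{0}^{p+m}\int_{B}e^{-\varphi}d\mathrm{vol}<\infty$ (resp. $f_{0}^{1+m}\int_{B}e^{-\varphi}d\mathrm{vol}<\infty$), i.e. $e^{-\varphi}\in L^{1}(B^{n},d\mathrm{vol})$. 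For (c), where $\lambda<0$: if such a manifold existed, the argument above would make $f$ constant, hence $\Delta_{\psi}f\equiv 0$, contradicting $\Delta_{\psi}f\ge-\lambda f>0$ (and in the $L^{\infty}$ case, $(\Delta_{\psi}\ln f)(x_{k})\le 1/k\to 0$ already contradicts $\Delta_{\psi}\ln f\ge-\lambda>0$); so no such manifold exists.

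I expect the genuinely delicate point to be the endpoint $p=1$. Proposition~\ref{Liouville} deliberately omits it — the $L^{1}$-Liouville property fails in general, and the measure $e^{-\psi}d\mathrm{vol}$ attached to a Ricci-Hessian type manifold need not be doubling, since $\ric_{\psi}$ is only bounded below by a (possibly negative) constant. So in the $L^{1}$ regime one cannot appeal to an $L^{1}$ version of Proposition~\ref{Liouville}; instead one needs a Karp-type $L^{1}$-Liouville statement for the drifted Laplacian, and it is exactly in verifying its hypotheses that the extra growth assumption $f(x)=O(e^{\alpha r(x)^{2-\epsilon}})$ and a weighted Bishop--Gromov-type volume estimate coming from $\ric_{\psi}\ge\lambda g_{B}$ are consumed. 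Everything else is routine.
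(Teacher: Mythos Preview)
Your argument is correct and follows essentially the same route as the paper: the equation $f\Delta_{\psi}f=\mu-\lambda f^{2}+|\nabla f|^{2}$ (your displayed identity divided by $f$) drives parts~(b) and~(c) via Proposition~\ref{Liouville} for $1<p<\infty$, the weak maximum principle for $p=\infty$, and the $L^{1}$-Liouville theorem of Pigola--Rigoli--Setti for $p=1$; part~(a) is the same $\varphi_{+}/\varphi_{-}$ split based on the sign of $c$. The only cosmetic difference is that in the $L^{\infty}$ case of~(c) the paper applies Proposition~\ref{WMP} to $f$ directly rather than to $\ln f$, and the paper does not record the incidental consequence $\mu=0$ in~(b).
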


\begin{proof}
\noindent \textbf{Part~\eqref{PreliminaryResults-a}:} First we assume $c\geq 0$. In this case, we consider the locally lipschitz function $\varphi_+=\max\{\varphi,0\}$ which  satisfies $\Delta_{\psi}\varphi_+=c-2\lambda\varphi_+\geq 0$ (see Lemma~\ref{RewrittenEquations}), since $\lambda\leq0$. From Proposition~\ref{Liouville}, it must be constant, thus, if there exists a point $x_0\in B$ such that $\varphi(x_0)>0$, then $\varphi\equiv\varphi(x_0)>0$. Otherwise, we have $\varphi\leq0$ on $B$. For  $c<0$ case, we apply an analogous argument to $\varphi_-=\min\{\varphi,0\}$ to conclude that either $\varphi\equiv\varphi(x_0)<0$ or $\varphi\geq0$ on $B$.
	
To show part~\eqref{PreliminaryResults-b} and~\eqref{PreliminaryResults-c} we use Lemma~\ref{RewrittenEquations} to obtain
\begin{equation}\label{mu&Lambda}
f\Delta_{\psi}f=\mu-\lambda f^2+|\nabla f|^2.
\end{equation}

\vspace{0.1cm}
\noindent \textbf{Part~\eqref{PreliminaryResults-b}:} For the $L^p$ case, we have $f\Delta_{\psi}f\geq0$ and then $f$ must be constant by Proposition~\ref{Liouville}. For the $L^1$ case, the result is a consequence of~\cite[Theorem 17]{Pigola et al.}. In both cases, $f^{p+m}\int_Be^{-\varphi}d\mathrm{vol}=\int_B f^p e^{-\psi}d\mathrm{vol}<\infty$, thus $e^{-\varphi}\in L^1(B,d\mathrm{vol})$. In particular, if $\varphi$ is constant, then $B$ must be of finite volume.
	
\vspace{0.1cm}
\noindent \textbf{Part~\eqref{PreliminaryResults-c}:} First we note that $f$ must be nonconstant from Eq.~\eqref{mu&Lambda}. Assume that $f\in L^\infty(B)$, i.e., $f^{\ast}=\sup_{B}f<+\infty$. By Proposition~\ref{WMP}, there exists a sequence $\{x_k\}\subset B$ along which
\begin{equation*}
\lim_{k\to+\infty}f(x_k)=f^*\quad \mbox{and} \quad \limsup_{k\to+\infty}(\Delta_{\psi}f)(x_k)\leq 0 .
\end{equation*}
On the other hand, we get from Eq.~\eqref{mu&Lambda} that $\Delta_{\psi}f\geq-\lambda f>0$, which implies a contradiction, since $\lambda<0$ and $f>0$.

For the $L^p$ case, we obtain that $\Delta_{\psi}f>0$, since $\lambda<0$, and thus we apply Proposition~\ref{Liouville} to conclude that $f$ must be constant, which is again a contradiction. For the $L^1$ case, the result is a consequence of~\cite[Theorem 17]{Pigola et al.}.
\end{proof}

As a consequence we take $f$ to be constant in a Ricci-Hessian type manifold and by using part~\eqref{PreliminaryResults-a} of Proposition~\ref{PreliminaryResults}, we obtain the following result.

\begin{corollary}
Let $g$ be a gradient expanding or steady Ricci soliton on a smooth manifold $B^n$ with potential function $\varphi$. If $\varphi\in L^{p}(B^n,e^{-\varphi}d\mathrm{vol})$, for some $1<p<+\infty$, then $\varphi$ does not change sign on $B^n.$
\end{corollary}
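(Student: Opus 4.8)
The plan is to read the corollary off Proposition~\ref{PreliminaryResults}\eqref{PreliminaryResults-a} by exhibiting the gradient Ricci soliton as a Ricci-Hessian type manifold with constant warping function. Fix any positive integer $m$ and take $f\equiv 1$ on $B^n$. Then $\nabla f=0$ and $\nabla^2 f=0$, so Eq.~\eqref{SuffCondition1} collapses to $\ric+\nabla^2\varphi=\lambda g_B$, which is precisely the gradient Ricci soliton equation satisfied by $g=g_B$ with potential $\varphi$. Since $g$ is complete by hypothesis, $(B^n,g_B)$ together with these $f$ and $\varphi$ is a Ricci-Hessian type manifold in the sense of the definition above, with $\lambda<0$ in the expanding case and $\lambda=0$ in the steady case; in either case $\lambda\leq 0$.

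Next I would verify that the additional hypothesis of Proposition~\ref{PreliminaryResults}, namely Eq.~\eqref{SuffCondition2}, is automatically in force. With $\nabla f=0$ it reduces to
\begin{equation*}
2\lambda\varphi-|\nabla\varphi|^2+\Delta\varphi=c
\end{equation*}
for some constant $c$, and this is Hamilton's classical identity for gradient Ricci solitons: tracing the soliton equation gives $S+\Delta\varphi=n\lambda$, while the standard conservation law for solitons gives $S+|\nabla\varphi|^2-2\lambda\varphi\equiv\text{const}$; subtracting the two yields the displayed equation with $c$ constant. This is exactly the remark, recalled at the beginning of Section~\ref{Section2.2}, that Eq.~\eqref{SuffCondition2} becomes Hamilton's equation when $f$ is constant.

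Finally, since $f\equiv 1$, the weight is $\psi=\varphi-m\ln f=\varphi$, so $e^{-\psi}d\mathrm{vol}=e^{-\varphi}d\mathrm{vol}$ and the hypothesis $\varphi\in L^p(B^n,e^{-\varphi}d\mathrm{vol})$ is literally the hypothesis $\varphi\in L^p(B^n,e^{-\psi}d\mathrm{vol})$ of Proposition~\ref{PreliminaryResults}\eqref{PreliminaryResults-a}. (Had one chosen a different constant value for $f$, the two weighted measures would merely differ by a positive multiplicative constant, so the $L^p$ spaces would still coincide with equivalent norms.) Applying that part of the proposition then yields that $\varphi$ does not change sign on $B^n$, which is the assertion.

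There is essentially no analytic obstacle here; the entire content sits in Proposition~\ref{PreliminaryResults}\eqref{PreliminaryResults-a}, whose proof in turn rests on the $L^p$-Liouville result Proposition~\ref{Liouville}. The only points that require a moment's care are purely formal: that a constant warping function is an admissible choice in the definition of Ricci-Hessian type manifold (it is, since any positive integer $m$ works and $f\equiv 1>0$), that completeness passes to $(B^n,g_B)$ (immediate, since a gradient Ricci soliton metric is complete by definition), and that Eq.~\eqref{SuffCondition2} holds (Hamilton's identity, as above).
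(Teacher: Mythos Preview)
Your proof is correct and follows precisely the approach the paper takes: the corollary is stated immediately after Proposition~\ref{PreliminaryResults} with the one-line justification ``take $f$ to be constant in a Ricci-Hessian type manifold and use part~\eqref{PreliminaryResults-a},'' and you have simply spelled out the details of that reduction (including the identification $\psi=\varphi$ and the fact that Eq.~\eqref{SuffCondition2} becomes Hamilton's identity).
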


We now combine Lemmas~\ref{Bochner} and~\ref{RewrittenEquations} to obtain the following identities.
\begin{lemma}\label{Bochner2}
Let $(B^n,g_B,\psi)$ be a Ricci-Hessian type manifold satisfying Eq.~\eqref{SuffCondition2}. The following hold:
\begin{enumerate}
\item\label{Lem-Bochner2a} $\frac{1}{2}\Delta_\psi|\nabla\varphi|^2=|\nabla^2\varphi|^2-\lambda|\nabla\varphi|^2+\frac{m}{f^2}\langle\nabla\varphi,\nabla f\rangle^2$.
\item\label{Lem-Bochner2b} $\frac{1}{2}\Delta_\psi|\nabla\ln f|^2=|\nabla^2\ln f|^2-\frac{2\mu}{f^2}|\nabla\ln f|^2+\lambda|\nabla\ln f|^2+\frac{m}{f^2}\langle\nabla\ln f,\nabla f\rangle^2$.
\end{enumerate}
\end{lemma}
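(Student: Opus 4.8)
The plan is to apply the Bochner-Weitzenbück type formula of Lemma~\ref{Bochner} to the two natural functions $u=\varphi$ and $u=\ln f$, and then simplify the right-hand side using the Ricci-Hessian condition together with the identities of Lemma~\ref{RewrittenEquations}.

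First I would prove part~\eqref{Lem-Bochner2a}. Taking $u=\varphi$ in Lemma~\ref{Bochner} gives
\begin{equation*}
\tfrac{1}{2}\Delta_{\psi}|\nabla\varphi|^2=|\nabla^2\varphi|^2+\langle\nabla\varphi,\nabla\Delta_{\psi}\varphi\rangle+\ric_{\varphi,\xi,m}(\nabla\varphi,\nabla\varphi)+\tfrac{m}{f^2}\langle\nabla f,\nabla\varphi\rangle^2.
\end{equation*}
Now I use the defining equation of a Ricci-Hessian type manifold: since $\ric_{\varphi,\xi,m}=\ric^m_{\xi}+\nabla^2\varphi$ and Eq.~\eqref{SuffCondition1} reads precisely $\ric+\nabla^2\varphi-\tfrac{m}{f}\nabla^2f=\lambda g_B$, i.e.\ $\ric_{\varphi,\xi,m}=\lambda g_B$, the third term becomes $\lambda|\nabla\varphi|^2$. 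For the second term, Lemma~\ref{RewrittenEquations} gives $\Delta_{\psi}\varphi=c-2\lambda\varphi$, so $\nabla\Delta_{\psi}\varphi=-2\lambda\nabla\varphi$ and hence $\langle\nabla\varphi,\nabla\Delta_{\psi}\varphi\rangle=-2\lambda|\nabla\varphi|^2$. Combining, $\tfrac{1}{2}\Delta_{\psi}|\nabla\varphi|^2=|\nabla^2\varphi|^2-2\lambda|\nabla\varphi|^2+\lambda|\nabla\varphi|^2+\tfrac{m}{f^2}\langle\nabla\varphi,\nabla f\rangle^2$, which is the claimed identity after collecting the $\lambda$-terms.

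Next I would prove part~\eqref{Lem-Bochner2b} in the same way with $u=\ln f$. Lemma~\ref{Bochner} gives $\tfrac{1}{2}\Delta_{\psi}|\nabla\ln f|^2=|\nabla^2\ln f|^2+\langle\nabla\ln f,\nabla\Delta_{\psi}\ln f\rangle+\ric_{\varphi,\xi,m}(\nabla\ln f,\nabla\ln f)+\tfrac{m}{f^2}\langle\nabla f,\nabla\ln f\rangle^2$. Again $\ric_{\varphi,\xi,m}=\lambda g_B$ turns the third term into $\lambda|\nabla\ln f|^2$. For the second term I use $\Delta_{\psi}\ln f=\tfrac{1}{f^2}(\mu-\lambda f^2)=\mu f^{-2}-\lambda$ from Lemma~\ref{RewrittenEquations}; differentiating, $\nabla\Delta_{\psi}\ln f=\mu\nabla(f^{-2})=-2\mu f^{-3}\nabla f=-2\mu f^{-2}\nabla\ln f$, so $\langle\nabla\ln f,\nabla\Delta_{\psi}\ln f\rangle=-\tfrac{2\mu}{f^2}|\nabla\ln f|^2$. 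Substituting yields exactly the stated formula.

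The computations are essentially routine once the right substitutions are identified; the only point requiring a little care is keeping track of which gradient is being used ($\nabla f$ versus $\nabla\ln f=f^{-1}\nabla f$) in the last term, but since the statement keeps a $\langle\nabla\ln f,\nabla f\rangle^2$ cross term this matches directly with what Lemma~\ref{Bochner} produces. I do not anticipate a genuine obstacle: the main thing to get right is that the hypothesis ``Ricci-Hessian type manifold'' is exactly the condition $\ric_{\varphi,\xi,m}=\lambda g_B$, so that the curvature term in Bochner's formula collapses to a multiple of the metric and the first-order terms are handled by the two $\psi$-Laplacian identities of Lemma~\ref{RewrittenEquations}.
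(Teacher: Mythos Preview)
Your proposal is correct and is exactly the approach the paper takes: the paper simply says the identities follow by combining Lemma~\ref{Bochner} with Lemma~\ref{RewrittenEquations}, and your write-up spells out precisely those substitutions. The computations of $\nabla\Delta_\psi\varphi=-2\lambda\nabla\varphi$ and $\nabla\Delta_\psi\ln f=-\tfrac{2\mu}{f^2}\nabla\ln f$, together with $\ric_{\varphi,\xi,m}=\lambda g_B$, are the only ingredients needed, and you have them all.
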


We will also need the following inequalities, which follow from Lemma~\ref{Bochner2} and a well-known Kato's inequality.
\begin{lemma}\label{bochner ineq}
Let $(B^n,g_B,\psi)$ be a Ricci-Hessian type manifold satisfying Eq.~\eqref{SuffCondition2}. The following inequalities hold in the weak sense:
\begin{equation}\label{Inequality1}
|\nabla\varphi|\Delta_\psi|\nabla\varphi|\geq-\lambda|\nabla\varphi|^2+\frac{m}{f^2}\langle\nabla\varphi,\nabla f\rangle^2
\end{equation}
and
\begin{equation}\label{Inequality2}
|\nabla\ln f|\Delta_\psi|\nabla\ln f|\geq \lambda|\nabla\ln f|^2-\frac{2\mu}{f^2}|\nabla\ln f|^2+\frac{m}{f^2}\langle\nabla\ln f,\nabla f\rangle^2.
\end{equation}
\end{lemma}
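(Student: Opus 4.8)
The plan is to derive the two inequalities by combining the Bochner-type identities of Lemma~\ref{Bochner2} with Kato's inequality $|\nabla^2 u|^2 \geq |\nabla|\nabla u||^2$, valid in the weak sense wherever $|\nabla u|$ fails to be smooth. First I would treat~\eqref{Inequality1}. Away from the critical set $\{\nabla\varphi = 0\}$, the function $|\nabla\varphi|$ is smooth, and a direct computation gives $|\nabla\varphi|\,\Delta_\psi|\nabla\varphi| = \tfrac12\Delta_\psi|\nabla\varphi|^2 - |\nabla|\nabla\varphi||^2$. Substituting part~\eqref{Lem-Bochner2a} of Lemma~\ref{Bochner2} for the first term on the right yields
\[
|\nabla\varphi|\,\Delta_\psi|\nabla\varphi| = |\nabla^2\varphi|^2 - |\nabla|\nabla\varphi||^2 - \lambda|\nabla\varphi|^2 + \frac{m}{f^2}\langle\nabla\varphi,\nabla f\rangle^2,
\]
and Kato's inequality $|\nabla^2\varphi|^2 - |\nabla|\nabla\varphi||^2 \geq 0$ disposes of the two Hessian terms, giving~\eqref{Inequality1} pointwise on the open set where $\nabla\varphi\neq 0$. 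The identical manipulation applied to part~\eqref{Lem-Bochner2b}, with $u = \ln f$, produces~\eqref{Inequality2} on $\{\nabla\ln f \neq 0\}$; note $\nabla f = 0$ exactly where $\nabla\ln f = 0$ since $f>0$, so the last term is well behaved there too.

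The one genuine subtlety — and the step I expect to require the most care — is upgrading these pointwise inequalities on the noncritical set to inequalities that hold in the weak (distributional) sense on all of $B^n$, including across the critical set where $|\nabla\varphi|$ (resp. $|\nabla\ln f|$) is merely Lipschitz. The standard device is to work with $u_\delta := \sqrt{|\nabla\varphi|^2 + \delta^2}$, which is smooth and positive for each $\delta>0$; one computes $u_\delta\,\Delta_\psi u_\delta = \tfrac12\Delta_\psi|\nabla\varphi|^2 - |\nabla u_\delta|^2$, uses $|\nabla u_\delta|^2 = \frac{|\nabla\varphi|^2}{u_\delta^2}\,|\nabla|\nabla\varphi||^2 \le |\nabla|\nabla\varphi||^2 \le |\nabla^2\varphi|^2$ together with Lemma~\ref{Bochner2}, and then lets $\delta\to 0^+$. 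A refinement of Kato's inequality for harmonic-type functions (or simply the crude bound just stated) shows the error terms vanish in the limit, and one tests against nonnegative $C^\infty_c$ functions to conclude that~\eqref{Inequality1} holds weakly on $B^n$. The same regularization handles~\eqref{Inequality2}. Since the statement only claims validity "in the weak sense," invoking this by-now-routine approximation — as is done, for instance, in the references to Pigola et al.~\cite{Pigola et al.} already cited — suffices, and I would keep that part of the write-up brief.

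Finally I would remark that no completeness or curvature hypothesis beyond being a Ricci-Hessian type manifold satisfying Eq.~\eqref{SuffCondition2} is used here: the two inequalities are purely local consequences of the Bochner formula in Lemma~\ref{Bochner2} and Kato's inequality, and completeness will only enter later when these inequalities are fed into the weak maximum principle (Proposition~\ref{WMP}) or the Liouville result (Proposition~\ref{Liouville}) to prove the triviality and nonexistence theorems.
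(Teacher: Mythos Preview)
Your proposal is correct and follows essentially the same route as the paper: rewrite $\tfrac12\Delta_\psi|\nabla u|^2$ as $|\nabla u|\,\Delta_\psi|\nabla u| + |\nabla|\nabla u||^2$, apply Kato's inequality, and substitute the identities from Lemma~\ref{Bochner2}. The only difference is that you spell out the $u_\delta=\sqrt{|\nabla u|^2+\delta^2}$ regularization to justify the weak-sense interpretation across the critical set, whereas the paper simply asserts validity in the weak sense without further comment.
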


\begin{proof}
For a smooth function $u$ on $B$ we have
\begin{equation*}
\frac{1}{2}\Delta_\psi|\nabla u|^2=|\nabla u|\Delta_\psi|\nabla u|+|\nabla|\nabla u||^2.
\end{equation*}
Combining this latter equality with the Kato's inequality
\begin{equation*}
|\nabla^2u|^2\geq|\nabla|\nabla u||^2,
\end{equation*}
one has
\begin{equation*}
|\nabla u|\Delta_\psi|\nabla u|\geq \frac{1}{2}\Delta_\psi|\nabla u|^2-|\nabla^2 u|^2.
\end{equation*}
Taking $u=\varphi$ into the latter inequality and using part~\eqref{Lem-Bochner2a} of Lemma~\ref{Bochner2} we have
\begin{equation*}
|\nabla\varphi|\Delta_\psi|\nabla\varphi|\geq\frac{1}{2}\Delta_\psi|\nabla\varphi|^2-|\nabla^2\varphi|^2= -\lambda|\nabla\varphi|^2+\frac{m}{f^2}\langle\nabla\varphi,\nabla f\rangle^2.
\end{equation*}
The second required inequality is analogously obtained.
\end{proof}

In what follows we continue presenting some results for Ricci-Hessian type manifolds satisfying Eq.~\eqref{SuffCondition2}. Theorem~\ref{PropositionVarphi} gives the conditions for Ricci-Hessian type manifold to be an $m$-quasi-Einstein metric. Before proving Theorem~\ref{PropositionVarphi} let us establish the following lemma.

\begin{lemma}\label{estimate potential}
Let $(B^n,g_B,\psi)$ be a Ricci-Hessian type manifold with $\lambda<0$ satisfying Eq.~\eqref{SuffCondition2}. Then, having fixed an origin $o\in B^n$, there exists a constant $c> 0$ such that $|\nabla\varphi|\leq c(1+r(x))$, where $r(x)=dist_{(B^n,g_B)}(x,o)$ is the distance function.
\end{lemma}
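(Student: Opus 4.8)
The plan is to realize $(B^n,g_B,\psi)$ as the base of a gradient Ricci soliton warped product and to transplant to it the classical linear growth estimate for the potential function of a gradient Ricci soliton. By the converse part of Proposition~\ref{CharacterizationGRSPW}, after choosing a complete $m$-dimensional manifold $F$ with $\ric_F=\mu g_F$ ($\mu$ as in \eqref{SuffCondition3}, $F$ a space form) together with a point $y_0\in F$, the warped product $M:=B^n\times_f F^m$ is a complete gradient Ricci soliton with potential $\phi=\tilde\varphi$ and the same negative constant $\lambda$. Since $\phi$ depends only on the base and the induced metric on the totally geodesic leaf $B\times\{y_0\}$ is $g_B$, one has $|\nabla^M\phi|(x,y_0)=|\nabla\varphi|(x)$ and $r_M\big((o,y_0),(x,y_0)\big)\le r(x)$; hence it suffices to prove $|\nabla^M\phi|(p)\le c\,\big(1+r_M((o,y_0),p)\big)$ on $M$.

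First I would record Hamilton's identity on $M$: taking the divergence of $\ric_M+\nabla^2\phi=\lambda g_M$ and using the traced equation $R_M+\Delta\phi=(n+m)\lambda$ produces a constant $c_0$ with $|\nabla\phi|^2+R_M-2\lambda\phi\equiv c_0$, where $R_M$ is the scalar curvature of $M$. Next I would invoke the classical lower bound $R_M\ge(n+m)\lambda$ for complete expanding gradient Ricci solitons; it follows from the fact that $R_M$ is bounded below together with the evolution identity $\Delta_\phi R_M=2\lambda R_M-2|\ric_M|^2\le 2\lambda R_M-\tfrac{2}{n+m}R_M^2$ and the weak maximum principle at infinity (Proposition~\ref{WMP} and the remark following it, applicable because $\ric_M+\nabla^2\phi=\lambda g_M$). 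Combining the two gives $|\nabla\phi|^2=2\lambda\phi+c_0-R_M\le-2|\lambda|\,\phi+c_1$, with $c_1:=c_0-(n+m)\lambda$.

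Then I would integrate this inequality along minimizing geodesics. Given $p\in M$, let $\gamma:[0,\ell]\to M$ be a unit-speed minimizing geodesic with $\gamma(0)=(o,y_0)$, $\gamma(\ell)=p$, $\ell=r_M((o,y_0),p)$, and set $h:=\phi\circ\gamma$. Then $|h'|\le|\nabla\phi|\circ\gamma\le\sqrt{c_1-2|\lambda|h}$, so $k:=c_1-2|\lambda|h\ge 0$ satisfies $k'\le 2|\lambda|\sqrt{k}$, whence $\sqrt{k(\ell)}\le\sqrt{k(0)}+|\lambda|\ell$; squaring and solving for $h$ yields $-\phi(p)\le a_1+a_2\ell^2$ for constants $a_1,a_2>0$ depending only on $o,y_0$ and the data. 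Feeding this back into the bound of the previous paragraph gives $|\nabla^M\phi|^2(p)\le a_3(1+\ell)^2$, and the reduction of the first paragraph turns this into $|\nabla\varphi|(x)\le\sqrt{a_3}\,(1+r(x))$, as required.

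The step I expect to be the real obstacle is the second paragraph: producing Hamilton's identity in this warped setting and, above all, the scalar curvature lower bound $R_M\ge(n+m)\lambda$ in the expanding case; once these are in hand the geodesic integration and the descent to $(B^n,g_B)$ are routine. One may also bypass $M$ and argue intrinsically on $B$, replacing $R_M$ by $\rho:=S-2m\tfrac{\Delta f}{f}-m(m-1)\tfrac{|\nabla f|^2}{f^2}+\tfrac{m\mu}{f^2}$ and $\Delta_\phi$ by $\Delta_\psi$, the two formulations being equivalent through Proposition~\ref{CharacterizationGRSPW}.
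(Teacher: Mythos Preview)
Your approach is essentially the same as the paper's: both realize $(B^n,g_B,\psi)$ as the base of a gradient expanding Ricci soliton warped product $M=B^n\times_f F^m$ via Proposition~\ref{CharacterizationGRSPW} and then transfer the linear growth estimate for $|\nabla\tilde\varphi|$ on $M$ back to $B$ along the leaf $B\times\{q\}$. The only difference is that the paper simply cites Zhang~\cite{Zhang} for the estimate $|\nabla\tilde\varphi|\le c(1+\bar r)$ on $M$, whereas you sketch its proof (Hamilton's identity, the scalar curvature lower bound $R_M\ge(n+m)\lambda$, and geodesic integration); the paper also records that $\bar r(x,q)=r(x)$, not merely the inequality you use.
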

\begin{proof}
Let us consider $\mu$ given by~\eqref{SuffCondition3} and an $m$-dimensional space form $(F^m,g_F)$ with constant scalar curvature $S_F=m\mu$ such that the warped metric $g=g_B+f^2 g_F$ is a gradient expanding Ricci soliton on $B^n\times F^m $ with potential function $\tilde{\varphi}$. Now we take a point $(o,q)\in B^n\times_f F^m$ so that $\overline{r}(x,y)=dist_{(B^n\times_f F^m)}((x,y),(o,q))$ is the associated distance function. It is easy to see that $\overline{r}(x,q)=r(x)$, where $(x,q)\in B^n\times\{q\}$. It follows from Zhang~\cite{Zhang} that there exists a constant $c>0$ such that $|\nabla\varphi(x)|=|\nabla\tilde{\varphi}(x,q)|\leq c(1+\overline{r}(x,q))=c(1+{r}(x))$.
\end{proof}

With the previous results in mind we prove our main theorem.

\subsection{Proof of Theorem~\ref{PropositionVarphi}}
\begin{proof}
Since $\lambda<0$, by inequality~\eqref{Inequality1} of Lemma~\ref{bochner ineq} we have
\begin{equation*}
\frac{1}{2}\Delta_\psi|\nabla \varphi|^2=|\nabla \varphi|\Delta_\psi|\nabla \varphi|+|\nabla|\nabla \varphi||^2\geq-\lambda|\nabla\varphi|^2+|\nabla|\nabla \varphi||^2\geq0.
\end{equation*}
Assume that $|\nabla\varphi|\in L^{\infty}(B)$, by Proposition~\ref{WMP} there exists a sequence $\{x_k\}\subset B$ such that
\begin{equation*}
\limsup_{k\to+\infty}\ (\Delta_{\psi}|\nabla\varphi|^2)(x_k)\leq0 \quad \mbox{and} \quad \lim_{k\to+\infty}|\nabla\varphi|^2(x_k)=\sup_{B}|\nabla\varphi|^2.
\end{equation*}
Hence, we obtain that $\sup_{B}|\nabla\varphi|^2=0$ and then $\varphi$ is a constant.
	
Assume now that $|\nabla\varphi|\in L^p(B,e^{-\psi}d\mathrm{vol})$, for some $1<p<+\infty$. Again by \eqref{Inequality1} we get $|\nabla\varphi|\Delta_{\psi}|\nabla\varphi|\geq0$ and since $|\nabla\varphi|\in Lip_{loc}(B^n)$, then we can apply Proposition~\ref{Liouville} to obtain that $|\nabla\varphi|$ is a constant. Thus, from part~\eqref{Lem-Bochner2a} of Lemma~\ref{Bochner2}, $\varphi$ must be constant.

For the $L^1$ case, we have by Lemma~\ref{estimate potential} that $|\nabla\varphi|\leq c(1+r(x))$. Moreover, since $|\nabla\varphi|\Delta_\psi|\nabla\varphi|\geq0$ and $|\nabla\varphi|\in Lip_{loc}(B^n)$, we are in position to apply the result of \cite[Theorem 17]{Pigola et al.} to obtain that $\varphi$ is a constant.
\end{proof}

Taking $f$ to be constant in Theorem~\ref{PropositionVarphi}, we recover the following result.

\begin{corollary}[Pigola et al.~\cite{Pigola et al.}]\label{TheoremVarphi}
Let $g$ be a gradient expanding Ricci soliton on a smooth manifold $B^n$ with potential function $\varphi$. Then, $g$ is a trivial Ricci soliton provided $|\nabla\varphi|$ lies in $L^p(B^n, e^{-\varphi}d\mathrm{vol})$, for some $1\leq p\leq+\infty$.
\end{corollary}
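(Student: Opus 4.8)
The plan is to deduce this corollary from Theorem~\ref{PropositionVarphi} by specializing to a constant warping function, after checking that a gradient expanding Ricci soliton is automatically a Ricci-Hessian type manifold satisfying Eq.~\eqref{SuffCondition2}. First I would fix any positive integer $m$ and take the constant warping function $f\equiv 1$. With this choice $\nabla f=0$ and Eq.~\eqref{SuffCondition1} reads $\ric+\nabla^2\varphi=\lambda g_B$, which is exactly the gradient Ricci soliton equation; since the soliton is expanding we have $\lambda<0$, and $(B^n,g_B)$ is complete by the standing definition. Hence $(B^n,g_B,\varphi)$ is a Ricci-Hessian type manifold with $\lambda<0$ in the sense of Section~\ref{Section2}, with $\psi=\varphi-m\ln f=\varphi$.

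Next I would verify that Eq.~\eqref{SuffCondition2} holds. With $f$ constant the term $\tfrac{m}{f}\nabla\varphi(f)$ vanishes, so Eq.~\eqref{SuffCondition2} becomes $2\lambda\varphi-|\nabla\varphi|^2+\Delta\varphi=c$, i.e. Hamilton's classical identity for gradient Ricci solitons. This follows from the soliton equation by a standard computation: taking the trace gives $S+\Delta\varphi=n\lambda$, and taking the divergence of $\ric+\nabla^2\varphi=\lambda g_B$ together with the contracted second Bianchi identity $\dv\ric=\tfrac12\nabla S$ and the commutation formula $\dv\nabla^2\varphi=\nabla\Delta\varphi+\ric(\nabla\varphi,\cdot)$ yields $\nabla\big(S+|\nabla\varphi|^2-2\lambda\varphi\big)=0$; eliminating $S$ via the trace identity gives the desired constant. (Equation~\eqref{SuffCondition3} is then trivially satisfied when $f$ is constant, so in fact $(B^n,g_B,\varphi)$ fits the full hypothesis of Proposition~\ref{CharacterizationGRSPW}, although only Eq.~\eqref{SuffCondition2} is needed here.)

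Finally I would translate the integrability hypothesis. Since $f$ is a positive constant, $\psi=\varphi-m\ln f$ differs from $\varphi$ by an additive constant, so $e^{-\psi}d\mathrm{vol}$ is a fixed positive multiple of $e^{-\varphi}d\mathrm{vol}$; consequently $|\nabla\varphi|\in L^p(B^n,e^{-\varphi}d\mathrm{vol})$ if and only if $|\nabla\varphi|\in L^p(B^n,e^{-\psi}d\mathrm{vol})$, for every $1\le p\le+\infty$. Therefore the hypotheses of Theorem~\ref{PropositionVarphi} are met, and that theorem yields that $\varphi$ is constant, i.e. $g$ is a trivial Ricci soliton, as claimed.

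I do not expect a genuine obstacle here: the corollary is in essence the $f\equiv\mathrm{const}$ case of Theorem~\ref{PropositionVarphi}, and the only points needing a line of care are (i) recalling the derivation of Hamilton's identity so as to confirm Eq.~\eqref{SuffCondition2}, and (ii) the $p=+\infty$ endpoint, where $L^\infty(B^n,e^{-\varphi}d\mathrm{vol})$ and $L^\infty(B^n,e^{-\psi}d\mathrm{vol})$ literally coincide so no constant intervenes. Everything else is a direct substitution into results already established in the excerpt.
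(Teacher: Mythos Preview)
Your proposal is correct and follows exactly the approach the paper indicates: the corollary is recovered from Theorem~\ref{PropositionVarphi} by taking $f$ constant, whereupon $\psi=\varphi$ (up to an additive constant), Eq.~\eqref{SuffCondition1} becomes the soliton equation, and Eq.~\eqref{SuffCondition2} reduces to Hamilton's identity, precisely as the paper itself notes at the start of Section~\ref{Section2.2}. Your extra care in deriving Hamilton's identity and handling the $p=+\infty$ endpoint is fine but not strictly needed, since the paper treats these as known.
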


Next, we prove the following nonexistence result.

\begin{proposition}\label{Propositionlnf}
There is no Ricci-Hessian type manifold $(B^n,g_B,\psi)$ satisfying Eq.~\eqref{SuffCondition2} with $\lambda>0$ and $\mu\leq0$ provided that the parameter function $f$ satisfies the following conditions: either $|\nabla\ln f|\in L^p(B,e^{-\psi}d\mathrm{vol})$, for some $1<p\leq+\infty$, or $|\nabla\ln f|\in L^1(B^n,e^{-\psi}d\mathrm{vol})$ and $|\nabla\ln f|=O(e^{\alpha r(x)^{2-\epsilon}})$ as  $r(x)\rightarrow +\infty$, for some constants $\alpha,\epsilon>0$.
\end{proposition}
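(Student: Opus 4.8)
The plan is to repeat, for the function $|\nabla\ln f|$, the scheme already used for $f$ in part~\eqref{PreliminaryResults-c} of Proposition~\ref{PreliminaryResults} and for $\varphi$ in the proof of Theorem~\ref{PropositionVarphi}, now feeding on part~\eqref{Lem-Bochner2b} of Lemma~\ref{Bochner2} and on inequality~\eqref{Inequality2} in place of Eq.~\eqref{mu&Lambda}. The starting point is a sign analysis of the right-hand sides: since $\lambda>0$, $\mu\leq0$ and $m>0$, every summand on the right of part~\eqref{Lem-Bochner2b} of Lemma~\ref{Bochner2} is nonnegative, so $\frac{1}{2}\Delta_\psi|\nabla\ln f|^2\geq\lambda|\nabla\ln f|^2\geq0$; likewise \eqref{Inequality2} gives $|\nabla\ln f|\,\Delta_\psi|\nabla\ln f|\geq\lambda|\nabla\ln f|^2\geq0$ in the weak sense. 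Because $f>0$ is smooth, $|\nabla\ln f|^2=|\nabla f|^2/f^2$ is a smooth function and $|\nabla\ln f|\in Lip_{loc}(B^n)$, so Propositions~\ref{WMP} and \ref{Liouville} and \cite[Theorem~17]{Pigola et al.} are all available.

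I would then argue that $f$ must be constant, treating the three cases of the hypothesis separately. If $|\nabla\ln f|\in L^\infty(B)$, apply Proposition~\ref{WMP} to the bounded $C^2$-function $u=|\nabla\ln f|^2$: along the sequence $\{x_k\}$ produced by the weak maximum principle one has $u(x_k)\to\sup_B u$ and $\limsup_k(\Delta_\psi u)(x_k)\leq0$, while $\Delta_\psi u\geq2\lambda u$; hence $2\lambda\sup_B u\leq0$, which (recall $u\geq0$) forces $\sup_B u=0$ and $f$ constant. If $|\nabla\ln f|\in L^p(B,e^{-\psi}d\mathrm{vol})$ for some $1<p<+\infty$, Proposition~\ref{Liouville} applied to $u=|\nabla\ln f|$ says that $|\nabla\ln f|$ is either nonpositive, hence identically zero, or a constant; a positive constant value is excluded by plugging it back into part~\eqref{Lem-Bochner2b} of Lemma~\ref{Bochner2}, which would give $0\geq\lambda|\nabla\ln f|^2>0$. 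In the remaining $L^1$ case the growth assumption $|\nabla\ln f|=O(e^{\alpha r(x)^{2-\epsilon}})$ lets one invoke \cite[Theorem~17]{Pigola et al.} exactly as in the proof of Theorem~\ref{PropositionVarphi}, concluding again that $|\nabla\ln f|$, and therefore $f$, is constant.

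To finish, observe that once $f$ is constant Eq.~\eqref{SuffCondition3} reduces to $\mu=\lambda f^2$, which is strictly positive since $\lambda>0$ and $f>0$; this contradicts the hypothesis $\mu\leq0$. Hence no Ricci-Hessian type manifold with the stated properties exists. The delicate point, as in the companion statements, is the $L^1$ case: one must make sure that $\Delta_\psi|\nabla\ln f|\geq0$ is read in the weak (distributional) sense even though $|\nabla\ln f|$ is only locally Lipschitz, and that the prescribed growth $O(e^{\alpha r(x)^{2-\epsilon}})$ is precisely the hypothesis demanded by \cite[Theorem~17]{Pigola et al.}; the $L^\infty$ and $L^p$ cases are then routine transcriptions of arguments already carried out in this section.
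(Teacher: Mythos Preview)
Your proof is correct and follows essentially the same approach as the paper: the same Bochner-type identity (Lemma~\ref{Bochner2}\eqref{Lem-Bochner2b}) and Kato inequality~\eqref{Inequality2}, the same trichotomy into the $L^\infty$, $L^p$ ($1<p<\infty$), and $L^1$ cases handled respectively by Proposition~\ref{WMP}, Proposition~\ref{Liouville}, and \cite[Theorem~17]{Pigola et al.}. The only cosmetic difference is the order of the contradiction: the paper first observes via Lemma~\ref{RewrittenEquations} that $f$ must be nonconstant and then derives $|\nabla\ln f|\equiv 0$ in each case, whereas you first force $f$ to be constant and then invoke $\mu=\lambda f^2>0$ from~\eqref{SuffCondition3}; the two are logically equivalent.
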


\begin{proof}
First note that, by Lemma~\ref{RewrittenEquations}, the function $f$ must be nonconstant, since $\lambda>0$ and $\mu\leq0$. For the $L^\infty$ case, we use Proposition~\ref{WMP} to ensure that there exists a sequence $\{x_k\}\subset B$ such that
\begin{equation*}
\limsup_{k\to+\infty}\ (\Delta_{\psi}|\nabla\ln f|^2)(x_k)\leq0 \quad \mbox{and} \quad \lim_{k\to+\infty}|\nabla\ln f|^2(x_k)=\sup_{B}|\nabla\ln f|^2.
\end{equation*}
On the other hand, from part~\eqref{Lem-Bochner2b} of Lemma~\ref{Bochner2} we have
\begin{equation*}
\Delta_{\psi}|\nabla\ln f|^2\geq2\lambda|\nabla\ln f|^2\geq0.
\end{equation*}
Hence, we obtain that $\lambda\sup_{B}|\nabla\ln f|^2=0$, which implies in a contradiction.
	
For the $L^p$ case, by inequality~\eqref{Inequality2} we obtain that $|\nabla\ln f|\Delta_\psi|\nabla\ln f|\geq0$, since $\lambda>0$ and $\mu\geq0$. Thus, by Proposition~\ref{Liouville}, $|\nabla\ln f|$ must be constant, which implies again a contradiction.

For the $L^1$ case,  the result is a  consequence of~\cite[Theorem 17]{Pigola et al.}.
\end{proof}

\section{Proof of Theorems~\ref{NonexistenceGRSPW}, \ref{TrivialityGRSPW} and \ref{TheoremLnf} }\label{proofmr}

Let $B^n\times_fF^m$ be a gradient Ricci soliton warped product. By Borges and Tenenblat~\cite{TenenblatBorges}, we can assume without loss of generality that the potential function is the lift $\tilde{\varphi}=\varphi\circ\pi$ of a smooth function $\varphi$ on $B$ to product. By Proposition~\ref{CharacterizationGRSPW}, the base space $B$ is a Ricci-Hessian type manifold satisfying \eqref{SuffCondition1} and \eqref{SuffCondition2}, while the fiber $F$ is an Einstein manifold with Ricci tensor $\ric_F=\mu g_F$, where $\mu$ is given by \eqref{SuffCondition3}.

For the sake of clarity of exposition and due to the importance of the result itself, we give now a proof of the Borges and Tenenblat's result.

\begin{lemma}[Borges and Tenenblat~\cite{TenenblatBorges}]\label{PotentialFunctionGRSWP}
Let $B^n\times_fF^m$ be a warped product with metric $g=\pi^*g_B+(f\circ\pi)^2\sigma^*g_F$ and warping function $f$. If $g$ is a gradient Ricci soliton, then the potential function depends only on the base.
\end{lemma}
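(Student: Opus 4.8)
The plan is to write the gradient Ricci soliton equation $\ric+\nabla^2\phi=\lambda g$ in components adapted to the orthogonal splitting $TM=TB\oplus TF$ of the warped product $M=B^n\times_f F^m$, and to read off from the mixed component that $\phi$ cannot depend nontrivially on the fiber.

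First I would recall the standard O'Neill identities for $M=B^n\times_f F^m$. For horizontal lifts $X,Y$ of vector fields on $B$ and vertical lifts $U,V$ of vector fields on $F$ one has $\nabla_X Y=\nabla^B_X Y$, $\nabla_X V=\nabla_V X=\frac{X(f)}{f}\,V$, $\ric(X,V)=0$, and $\ric(X,Y)=\ric_B(X,Y)-\frac{m}{f}\nabla^2_{B} f(X,Y)$, where $\nabla^B$, $\nabla^2_{B}$, $\ric_B$ are the Levi-Civita connection, the Hessian and the Ricci tensor of $(B,g_B)$. Since also $g(X,V)=0$, the mixed component of the soliton equation collapses to
\[
\nabla^2\phi(X,V)=X\big(V(\phi)\big)-\frac{X(f)}{f}\,V(\phi)=0 ,
\]
i.e. $X\big(V(\phi)\big)=X(\ln f)\,V(\phi)$ for every horizontal $X$ and vertical $V$. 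Equivalently the function $V(\phi)/f$ has vanishing gradient along $B$; as $B$ is connected, $V(\phi)$ equals $f$ times a function of the fiber point only, for each $V$, hence equals $f$ times a $1$-form on $F$. Letting $V$ vary and using that the fiber exterior derivative satisfies $d^F d^F\phi=0$, that $1$-form is closed, so on simply connected fiber patches there are functions $h$ on $F$ and $k$ on $B$ with $\phi(x,y)=f(x)h(y)+k(x)$.

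It then remains to see that $h$ is constant. For that I would feed $\phi=fh+k$ into the purely horizontal component of the soliton equation. A direct computation gives $\nabla^2\phi(X,Y)=h\,\nabla^2_{B} f(X,Y)+\nabla^2_{B} k(X,Y)$, so the equation becomes
\[
\ric_B(X,Y)-\tfrac{m}{f}\nabla^2_{B} f(X,Y)+h(y)\,\nabla^2_{B} f(X,Y)+\nabla^2_{B} k(X,Y)=\lambda g_B(X,Y),
\]
valid for all $y\in F$. The only $y$-dependence here is the scalar factor $h(y)$ multiplying $\nabla^2_{B} f$, so if $h$ is nonconstant then $\nabla^2_{B} f\equiv 0$ on $B$. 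But a positive function with parallel gradient on a complete manifold is necessarily constant — otherwise $f$ is affine along a complete geodesic issuing in the direction of $\nabla^B f$ and hence becomes non-positive, contradicting $f>0$ — so $f$ would be constant, which is excluded for a genuine warped metric. Therefore $h$ is constant and $\phi=h_0 f(x)+k(x)+\mathrm{const}$ is a function of the base alone.

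The routine part is the bookkeeping with the warped-product curvature and connection formulas, in particular being careful with the factors of $f$ when expanding Hessians of $\phi$ in the vertical directions. The one delicate point, which I expect to be the real content of the lemma, is the final step — ruling out nontrivial fiber-dependence of $\phi$ — and it rests on completeness of $B$ together with $f>0$. (When $f$ is constant the metric is a Riemannian product, and the statement is read under the standing assumption that the warped product is nontrivial.)
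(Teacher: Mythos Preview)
Your argument is correct and follows the same two-step route as the paper: the mixed block of the soliton equation yields $\phi=fh+k$ (the paper obtains this globally and directly from $X(U(\phi f^{-1}))=0$, so your closed-one-form detour via simply connected fiber patches is unnecessary), and then the horizontal block forces $h$ to be constant. In that last step you are actually more explicit than the paper, which simply asserts that a positive nonconstant $f$ must have $\nabla^2_B f\neq 0$ at some point, whereas you spell out the completeness-plus-positivity argument that underlies this claim.
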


\begin{proof}
When $f$ is a constant, $(B\times F,g)$ is a standard Riemannian product and the result of the lemma is well-known. Indeed, this case follows by simple analysis on Eq.~\eqref{RicciSolitonEquation}.

For $f$ nonconstant the proof follows the same argument as the more general case in~\cite{TenenblatBorges}. Suppose $g$ is a gradient Ricci soliton warped metric with potential function $\phi$. Then, the Ricci tensor of $g$ satisfies
\begin{equation}\label{RicciSolitonEquation}
\ric+\nabla^2\phi=\lambda g,
\end{equation}
for some constant $\lambda\in\Bbb R$. By the standard expressions for the Ricci tensor and for the Hessian of a smooth function $\phi$ on the warped product (see Bishop and O'Neill~\cite{BishopO'Neill}), we obtain
\begin{equation*}
0=(\nabla^2\phi)(X,U)=X(U(\phi))-(\nabla_XU)(\phi)=X(U(\phi))-\frac{X(f)}{f}U(\phi),
\end{equation*}
for all $X,U$ horizontal and vertical vector fields, respectively. Next, we compute
\begin{equation*}
X(U(\phi f^{-1}))=X(U(\phi)f^{-1})=\frac{1}{f}\left[X(U(\phi))-\frac{X(f)}{f}U(\phi)\right]=0.
\end{equation*}
Hence, the function $U(\phi f^{-1})$ depends only on the fiber $F$. Thus, without loss of generality, we can assume that $\phi=\varphi+fh,$ for some functions $\varphi$ on $B$ and $h$ on $F$. Now we take a unitary geodesic $\gamma$ on $(B^n,g_B)$, so that equation~\eqref{RicciSolitonEquation} reads along $\gamma$ as
\begin{equation*}
\ric_B(\gamma',\gamma')+\varphi''-\frac{m}{f}f''+h f''=\lambda.
\end{equation*}
Thus, for any vertical vector field $V$, one has $f''V(h)=0$. Since we are considering $f$ positive and nonconstant, there exists a point $(p,q)\in B^n\times F^m$ such that $f''(p)\neq0$, for all $q\in F$. Then, $h$ must be constant on $F$, i.e., $\phi$ depends only on $B$.
\end{proof}

We are ready to give the proofs.

\subsection{Proof of Theorem~\ref{NonexistenceGRSPW}}
\begin{proof}
It is immediate consequence of Proposition~\ref{CharacterizationGRSPW} and part~\eqref{PreliminaryResults-b} of Proposition~\ref{PreliminaryResults}.
\end{proof}

\subsection{Proof of Theorem~\ref{TrivialityGRSPW}}
\begin{proof}
It is immediate consequence of Proposition~\ref{CharacterizationGRSPW} and part~\eqref{PreliminaryResults-c} of Proposition~\ref{PreliminaryResults}.
\end{proof}

\subsection{Proof of Theorem~\ref{TheoremLnf}}
\begin{proof}
It is immediate consequence of Propositions~\ref{CharacterizationGRSPW} and \ref{Propositionlnf}.
\end{proof}

\section{Concluding remarks}

Under the same hypotheses of Theorem~\ref{TheoremLnf}, we prove a triviality result in the class of gradient steady Ricci soliton warped products with fiber having nonpositive scalar curvature, namely:

\begin{theorem}
Let $B^n\times_f F^m$ be a gradient steady Ricci soliton warped product with fiber having nonpositive scalar curvature. Then, it must be a standard Riemannian product provided $f$ satisfies the following conditions: either $|\nabla\ln f|\in L^p(B,e^{-\psi}d\mathrm{vol})$, for some $1<p\leq+\infty$, or $|\nabla\ln f|\in L^1(B^n,e^{-\psi}d\mathrm{vol})$ and $|\nabla\ln f|=O(e^{\alpha r(x)^{2-\epsilon}})$ as  $r(x)\rightarrow +\infty$, for some constants $\alpha,\epsilon>0$.
\end{theorem}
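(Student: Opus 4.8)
The plan is to pass to the base via Proposition~\ref{CharacterizationGRSPW}. If $B^n\times_f F^m$ is a gradient steady Ricci soliton warped product, then $(B^n,g_B,\psi)$ is a Ricci-Hessian type manifold satisfying Eq.~\eqref{SuffCondition2} with $\lambda=0$, and the fiber is an Einstein manifold with $\ric_F=\mu g_F$; the hypothesis that $F$ has nonpositive scalar curvature gives $m\mu=S_F\leq 0$, hence $\mu\leq 0$. Since the warped product is a standard Riemannian product precisely when $f$ is constant, it suffices to prove that $f$ is constant under the stated integrability of $|\nabla\ln f|$. This is the steady counterpart of Proposition~\ref{Propositionlnf}, whose proof exploited the favourable sign $\lambda>0$; with $\lambda=0$ that term disappears, so the needed positivity has to be extracted from elsewhere.

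The observation that makes this work is $\nabla f=f\nabla\ln f$, so that $\langle\nabla\ln f,\nabla f\rangle=f|\nabla\ln f|^2$ and the last term of part~\eqref{Lem-Bochner2b} of Lemma~\ref{Bochner2} equals $m|\nabla\ln f|^4$. Setting $\lambda=0$ and using $\mu\leq 0$, part~\eqref{Lem-Bochner2b} of Lemma~\ref{Bochner2} and inequality~\eqref{Inequality2} of Lemma~\ref{bochner ineq} specialize to
\[
\tfrac12\Delta_\psi|\nabla\ln f|^2=|\nabla^2\ln f|^2-\tfrac{2\mu}{f^2}|\nabla\ln f|^2+m|\nabla\ln f|^4\ \geq\ m|\nabla\ln f|^4
\]
and, in the weak sense,
\[
|\nabla\ln f|\,\Delta_\psi|\nabla\ln f|\ \geq\ -\tfrac{2\mu}{f^2}|\nabla\ln f|^2+m|\nabla\ln f|^4\ \geq\ m|\nabla\ln f|^4\ \geq\ 0 .
\]
The quartic lower bound here plays the role that the linear term $2\lambda|\nabla\ln f|^2$ played in the shrinking case.

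With these two inequalities the argument splits into the three integrability regimes, as in the proofs of Theorem~\ref{PropositionVarphi} and Proposition~\ref{Propositionlnf}. For $|\nabla\ln f|\in L^\infty(B^n)$, the function $w:=|\nabla\ln f|^2$ is $C^2$ and bounded, so Proposition~\ref{WMP} produces a sequence $\{x_k\}\subset B$ with $w(x_k)\to\sup_B w$ and $(\Delta_\psi w)(x_k)\leq 1/k$; since $\Delta_\psi w\geq 2m w^2$, this forces $w(x_k)^2\leq 1/(2mk)$, whence $\sup_B w=0$ and $f$ is constant. For $|\nabla\ln f|\in L^p(B^n,e^{-\psi}d\mathrm{vol})$ with $1<p<+\infty$, we have $|\nabla\ln f|\in Lip_{loc}(B^n)$ and $\Delta_\psi|\nabla\ln f|\geq 0$ weakly, so Proposition~\ref{Liouville} forces $|\nabla\ln f|$ to be a constant $c_0$ (the alternative $|\nabla\ln f|\leq 0$ gives $c_0=0$ at once); substituting $c_0$ back into the first displayed identity yields $0=|\nabla^2\ln f|^2-\tfrac{2\mu}{f^2}c_0^2+m c_0^4$ with every summand nonnegative, so $c_0=0$. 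For the $L^1$ case, $|\nabla\ln f|$ is again weakly $\psi$-subharmonic, belongs to $L^1(B^n,e^{-\psi}d\mathrm{vol})$ and obeys $|\nabla\ln f|=O(e^{\alpha r(x)^{2-\epsilon}})$, so \cite[Theorem~17]{Pigola et al.} shows it is constant and the same substitution forces it to vanish. In every case $f$ is constant, that is, $B^n\times_f F^m$ is a standard Riemannian product.

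I expect the only genuine obstacle to be this structural point: once $\lambda=0$ there is no zeroth-order term of definite sign in the Bochner identity for $|\nabla\ln f|^2$, and the remedy is to recognize that $\frac{m}{f^2}\langle\nabla\ln f,\nabla f\rangle^2=m|\nabla\ln f|^4$, which simultaneously feeds the weak maximum principle at infinity, the $L^p$-Liouville theorem, and the $L^1$ result of Pigola et al. The remaining ingredients---that $|\nabla\ln f|^2$ is $C^2$, that $|\nabla\ln f|$ is locally Lipschitz, and that $\Delta_\psi|\nabla\ln f|\geq 0$ holds weakly across the zero set of $\nabla\ln f$ by Kato's inequality---are routine and are handled exactly as in Lemma~\ref{bochner ineq} and the proof of Theorem~\ref{PropositionVarphi}.
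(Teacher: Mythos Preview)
Your proof is correct and follows essentially the same route as the paper: reduce to the base via Proposition~\ref{CharacterizationGRSPW}, run the three-case argument of Proposition~\ref{Propositionlnf} to make $|\nabla\ln f|$ constant, and then feed that constant back into part~\eqref{Lem-Bochner2b} of Lemma~\ref{Bochner2} to force it to vanish. The paper's proof is only a two-line sketch (``follow the same steps \ldots\ using part~\eqref{Lem-Bochner2b} if necessary''); you have correctly identified and made explicit the one point that sketch hides, namely that $\frac{m}{f^2}\langle\nabla\ln f,\nabla f\rangle^2=m|\nabla\ln f|^4$ supplies the needed positivity once the $\lambda$-term drops out.
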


The prove is an immediate consequence of Proposition~\ref{CharacterizationGRSPW} together with the following proposition.

\begin{proposition}\label{Propositon_steady_case_triviality}
Let $(B^n,g_B,\psi)$ be a Ricci-Hessian type manifold satisfying Eq.~\eqref{SuffCondition2} with $\lambda=0$ and $\mu\leq0$. Then, the parameter function $f$ is a constant provided it satisfies the following conditions: either $|\nabla\ln f|\in L^p(B,e^{-\psi}d\mathrm{vol})$, for some $1<p\leq+\infty$, or $|\nabla\ln f|\in L^1(B^n,e^{-\psi}d\mathrm{vol})$ and $|\nabla\ln f|=O(e^{\alpha r(x)^{2-\epsilon}})$ as  $r(x)\rightarrow +\infty$, for some constants $\alpha,\epsilon>0$.
\end{proposition}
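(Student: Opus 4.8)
The plan is to mimic almost verbatim the proof of Theorem~\ref{PropositionVarphi}, but with $\ln f$ in place of $\varphi$ and using inequality~\eqref{Inequality2} of Lemma~\ref{bochner ineq} instead of~\eqref{Inequality1}. Set $u = \ln f$. Since $\lambda = 0$ and $\mu \le 0$, inequality~\eqref{Inequality2} reads
\begin{equation*}
|\nabla\ln f|\,\Delta_\psi|\nabla\ln f| \;\ge\; -\frac{2\mu}{f^2}|\nabla\ln f|^2 + \frac{m}{f^2}\langle\nabla\ln f,\nabla f\rangle^2 \;\ge\; 0
\end{equation*}
in the weak sense. Combining this with the identity $\tfrac12\Delta_\psi|\nabla\ln f|^2 = |\nabla\ln f|\Delta_\psi|\nabla\ln f| + |\nabla|\nabla\ln f||^2$, one gets $\tfrac12\Delta_\psi|\nabla\ln f|^2 \ge 0$ as well. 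So $|\nabla\ln f|$ is a subsolution of $\Delta_\psi$ in all three regimes, and the appropriate rigidity tool finishes each case.

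First I would treat the $L^\infty$ case: assuming $|\nabla\ln f|$ is bounded, apply the weak maximum principle at infinity (Proposition~\ref{WMP}) to $|\nabla\ln f|^2$ to produce a sequence $\{x_k\}$ with $|\nabla\ln f|^2(x_k)\to \sup_B|\nabla\ln f|^2$ and $\limsup_k(\Delta_\psi|\nabla\ln f|^2)(x_k)\le 0$; since $\Delta_\psi|\nabla\ln f|^2 \ge -\tfrac{4\mu}{f^2}|\nabla\ln f|^2 \ge 0$, at the peak we are forced (using $\mu\le 0$ and $f>0$) to conclude $\sup_B|\nabla\ln f|^2 = 0$, so $f$ is constant. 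Next, for the $L^p$ case with $1<p<+\infty$: from $|\nabla\ln f|\Delta_\psi|\nabla\ln f| \ge 0$ and $|\nabla\ln f|\in Lip_{loc}(B^n)\cap L^p(B^n,e^{-\psi}d\mathrm{vol})$, Proposition~\ref{Liouville} gives that $|\nabla\ln f|$ is constant (the alternative $|\nabla\ln f|\le 0$ forces the same conclusion since it is nonnegative). Then part~\eqref{Lem-Bochner2b} of Lemma~\ref{Bochner2} reads $0 = |\nabla^2\ln f|^2 - \tfrac{2\mu}{f^2}|\nabla\ln f|^2 + \tfrac{m}{f^2}\langle\nabla\ln f,\nabla f\rangle^2$ with all three terms nonnegative, forcing $|\nabla^2\ln f|^2 = 0$; combined with $|\nabla\ln f|$ constant one sees $|\nabla\ln f| \equiv 0$, hence $f$ is constant. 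Finally, for the $L^1$ case, the growth hypothesis $|\nabla\ln f| = O(e^{\alpha r(x)^{2-\epsilon}})$ together with $|\nabla\ln f|\Delta_\psi|\nabla\ln f|\ge 0$ and $|\nabla\ln f|\in Lip_{loc}(B^n)$ puts us exactly in the setting of~\cite[Theorem 17]{Pigola et al.}, which yields that $|\nabla\ln f|$, hence $f$, is constant.

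The main subtlety — and the only place one must be slightly careful — is the $L^p$ argument: Proposition~\ref{Liouville} only gives the dichotomy ``$|\nabla\ln f|$ constant or $|\nabla\ln f|\le 0$'', and since $|\nabla\ln f|\ge 0$ the second branch already says $|\nabla\ln f|\equiv 0$; in the ``constant'' branch one still needs Lemma~\ref{Bochner2}\eqref{Lem-Bochner2b} to upgrade a possibly nonzero constant to zero, which works precisely because $\lambda=0$ and $\mu\le 0$ make every term on the right-hand side have the same sign. (Note that unlike Theorem~\ref{PropositionVarphi} there is no $\lambda|\nabla\ln f|^2$ term with the wrong sign to worry about, so no analogue of Lemma~\ref{estimate potential} is needed here; the $L^1$ case already supplies its own growth hypothesis.) Everything else is a routine transcription of the arguments in Proposition~\ref{Propositionlnf} and Theorem~\ref{PropositionVarphi}.
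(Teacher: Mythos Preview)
Your approach is essentially the same as the paper's, which also reduces to the argument of Proposition~\ref{Propositionlnf} and then invokes part~\eqref{Lem-Bochner2b} of Lemma~\ref{Bochner2} to upgrade ``$|\nabla\ln f|$ constant'' to ``$|\nabla\ln f|\equiv 0$''. The $L^p$ and $L^1$ cases are handled correctly.

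There is, however, a small but genuine gap in your $L^\infty$ argument. You keep only the lower bound $\Delta_\psi|\nabla\ln f|^2 \ge -\tfrac{4\mu}{f^2}|\nabla\ln f|^2$ and claim this forces $\sup_B|\nabla\ln f|^2=0$ along the maximizing sequence. But if $\mu=0$ this inequality is vacuous, and even when $\mu<0$ nothing prevents $f(x_k)\to+\infty$, in which case $-\tfrac{4\mu}{f(x_k)^2}|\nabla\ln f|^2(x_k)\to 0$ without forcing the supremum to vanish. The fix is already implicit in the inequality you wrote at the outset: since $\langle\nabla\ln f,\nabla f\rangle = f|\nabla\ln f|^2$, the term $\tfrac{m}{f^2}\langle\nabla\ln f,\nabla f\rangle^2$ equals $m|\nabla\ln f|^4$, and hence part~\eqref{Lem-Bochner2b} of Lemma~\ref{Bochner2} (with $\lambda=0$, $\mu\le 0$) gives $\tfrac12\Delta_\psi|\nabla\ln f|^2 \ge m|\nabla\ln f|^4$. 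Along the weak-maximum-principle sequence this yields $0\ge 2m(\sup_B|\nabla\ln f|^2)^2$, and since $m>0$ you conclude $\sup_B|\nabla\ln f|^2=0$ regardless of the value of $\mu$ or the behaviour of $f(x_k)$. With this correction your proof is complete and matches the paper's.
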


\begin{proof}
Following the same steps as in the proof of Proposition~\ref{Propositionlnf}, we conclude that $|\nabla\ln f|$ is constant. Using part~\eqref{Lem-Bochner2b} of Lemma~\ref{Bochner2} if necessary, we get $|\nabla\ln f|=0$. Hence, $f$ must be constant.
\end{proof}

\section{Acknowledgements}
We would like to express our sincere thanks to the anonymous referee for his/her careful reading and useful comments which helped us improve our paper. This work has been partially supported by Conselho Nacional de Desenvolvimento Cient\'ifico e Tecnol\'ogico (CNPq), of the Ministry of Science, Technology and Innovation of Brazil (Grants 428299/2018-0 and 307374/2018-1).

\end{document}